% ----------------------------------------------------------------
% AMS-LaTeX Paper ************************************************
% **** -----------------------------------------------------------

\documentclass[reqno]{amsart}
\usepackage{amssymb,amsmath,amsthm,amscd,latexsym,amsfonts}
\usepackage{mathtools}
\usepackage[english]{babel}
\usepackage[T1]{fontenc}
\usepackage{inputenc}
\usepackage[arrow,matrix]{xy}
\usepackage{amsaddr}
\usepackage{graphicx}
\usepackage{cite}
\tolerance=5000 \topmargin -1cm \oddsidemargin=0,5cm
\evensidemargin=-0,2cm \textwidth 15.6cm \textheight 24cm
\linespread{1.0}
% ----------------------------------------------------------------
\vfuzz2pt % Don't report over-full v-boxes if over-edge is small
\hfuzz2pt % Don't report over-full h-boxes if over-edge is small
% THEOREMS -------------------------------------------------------
\newtheorem{thm}{Theorem}[section]
\newtheorem{cor}[thm]{Corollary}

\newtheorem{prop}[thm]{Proposition}

\theoremstyle{definition}
\newtheorem{defn}[thm]{Definition}

\theoremstyle{remark}

\numberwithin{equation}{section}
% MATH -----------------------------------------------------------

\DeclareMathOperator{\Hom}{Hom}
\DeclareMathOperator{\Orb}{Orb}
\DeclareMathOperator{\Ann}{Ann}
\DeclareMathOperator{\Ker}{Ker}
\DeclareMathOperator{\Img}{Im}

\DeclareMathOperator{\cent}{Center}
\DeclareMathOperator{\spann}{span}
\DeclareMathOperator{\GL}{GL}
\DeclareMathOperator{\Leib}{\texttt{Leib}}
% ----------------------------------------------------------------
\begin{document}

\title[Solvable Leibniz algebras with quasi-filiform Lie algebras nilradicals]{Solvable Leibniz algebras with quasi-filiform Lie
 algebras of maximum length nilradicals}

\author{Kh.~A.~Khalkulova\textsuperscript{1}, M.~Ladra\textsuperscript{2}, B.~A.~Omirov\textsuperscript{3}, A.~M.~Sattorov\textsuperscript{4}}

\address{\textsuperscript{1,4}Institute of Mathematics, 81, Mirzo Ulug'bek street, 100125, Tashkent, Uzbekistan, xalkulova@gmail.com, saloberdi90@mail.ru}
\address{\textsuperscript{2}Department of Mathem\'aticas, Institute of Matem\'aticas, University of Santiago de Compostela, 15782, Spain, manuel.ladra@usc.es}
\address{\textsuperscript{3} National University of Uzbekistan, 4, University street, 100174, Tashkent,  Uzbekistan, omirovb@mail.ru}

\begin{abstract} In this paper solvable Leibniz algebras whose nilradical is quasi-filiform Lie algebra of maximum length, are classified.
 %$(n\geq6)$.
The rigidity of such Leibniz algebras with two-dimensional complemented space to nilradical is proved.
%It is shown that non-vanishing second cohomology of some solvable Lie algebras with quasi-filiform Lie algebras of maximum length nilradical.

\end{abstract}

\subjclass[2010] {17A32, 17A36, 17B30, 17B56.}

\keywords{Lie algebra, Leibniz algebra, solvable algebra, nilradical, $\mathbb{Z}$-gradation, maximal length, quasi-filiform algebra, group of cohomology, rigidity.}

\maketitle

%\address{}%
%\email{}%

%\thanks{The third named author was partially supported by IMU/CDC-program, and he also supported
%by Instituto de Mat\'{e}maticas de la Universidad de Sevilla. The
%authors were supported by Ministerio de Econom\'{\i}a y
%Competitividad
%(Spain), grant MTM2013-43687-P (European FEDER support included).}

%\subjclass{17A32, 17A36, 17B30, 13D10}
%\keywords{Leibniz algebra, natural gradation, filiform algebra, solvability, nilpotency, nilradical, derivation, group of cohomology, linear integrable deformation, rigitity.}

%\date{}%
%\dedicatory{The firth author...}%
%\commby{}%

\section{Introduction}

Leibniz algebras are ``non-commutative'' analogue of Lie algebras. They were introduced about twenty five years ago by J.-L. Loday in his cyclic homology study \cite{Loday,Loday1}. Since that the theory of Leibniz algebras has been actively studied and many results of the theory of Lie algebras have been extended to Leibniz algebras. Leibniz algebras inherit an important property of Lie algebras, the right multiplication operator of a Leibniz algebra is a derivation.

Thanks to Levi-Malcev's theorem, the solvable Lie algebras \cite{Mub,Mal} have played an important role in the theory of Lie algebras over the last years, either in the classification theory or in geometrical and analytical applications. The investigation of solvable Lie algebras with special types of nilradicals was the subject
of various paper \cite{AnCaGa1,AnCaGa1x,AnCaGa2,Boyko,Campoamor,Mub,TrWi,WaLiDe,NdWi,SnWi}. In Leibniz algebras, the analogue of Levi-Malcev's theorem was recently proved in \cite{Bar}, thus solvable Leibniz algebras also play a central role in the study of Leibniz algebras. In particular, the classifications of $n$-dimensional solvable Leibniz algebras with some restriction on their nilradicals have been obtained (see \cite{Adashev,Adashev2,Casas,Abror,Nulfilrad,aloberdi}).

Since the description of finite-dimensional solvable Lie algebras is a boundless problem, lately  geometric approaches have been  developed. Relevant tools of geometric approaches are Zariski topology and natural action of linear reductive group on varieties of algebras in a such way that orbits under the action consists of isomorphic algebras.
It is a well-known result of algebraic geometry that any algebraic variety (evidently, algebras defined via identities form an algebraic variety) is a union of a finite number of irreducible components.
The most important algebras are those whose orbits under the action are open sets in Zariski topology.
The algebras of a variety with open orbits are important since the closures of orbits of such algebras form irreducible components of the variety. At the same time, there are irreducible components which cannot be obtained via closure of the orbit of any algebra. This fact does not detract the importance of algebras with open orbits. This is a motivation of many works focused to discovering  algebras with open orbits and to describe sufficient properties of such algebras \cite{Burde,O'Halloran1,O'Halloran2}.

The purpose of the present work is to continue the study of solvable Leibniz algebras with a given nilradical. Thanks to work \cite{Gomez} the quasi-filiform Lie algebras of maximum length are known and we focus our attention to the description of the solvable Leibniz algebras with quasi-filiform Lie algebras of maximum length nilradicals. In order to achieve our description we use the method which involve non-nilpotent outer derivations of the nilradicals.

Throughout the paper vector spaces and algebras are  finite-dimensional over the field of the complex numbers. Moreover, in the table of multiplication  of an algebra the omitted products are assumed to be zero and, if it is not noted, we   consider non-nilpotent solvable algebras.

\section{Preliminaries}

In this section we give necessary definitions and preliminary results.

\begin{defn}[\cite{Loday}] A  vector space $L$ over a field $\mathbb{F}$ equipped with bilinear bracket $[-,-]$ is called a \emph{Leibniz algebra} if for any $x,y,z\in L$ the so-called Leibniz identity
\[ \big[x,[y,z]\big]=\big[[x,y],z\big] - \big[[x,z],y\big] \]  holds.
\end{defn}

For examples of Leibniz algebras we refer to papers \cite{Adashev2,Abror,Loday,Loday1} and references therein.

Further we   use the following notation
\[ {\mathcal L}(x, y, z)=[x,[y,z]] - [[x,y],z] + [[x,z],y].\]
It is obvious that Leibniz algebras are determined by the identity
${\mathcal L}(x, y, z)=0$.

From the Leibniz identity we conclude that the elements $[x,x], [x,y]+[y,x]$ for any $x, y \in L$ lie in  $\Ann_r(L) =\{x \in L \mid [y,x] = 0,\ \text{for \ all}\ y \in L \}$, the \emph{right annihilator} of the Leibniz algebra $L$. Note that $\Ann_r(L)$ is a two-sided ideal of $L$.

The set $\cent(L)=\{x\in L  \mid   [x,y]=0=[y,x], \ \text{for \ all}\ y \in L \}$ is said to be the \emph{center} of $L$ and it forms two-sided ideal of $L$.

The notion of derivation for the Leibniz algebras case is defined as usual, that is,
 a linear map $d \colon L \rightarrow L$ of a Leibniz algebra $L$ is called a \emph{derivation} if it satisfies the condition
\begin{equation}\label{eq0}
d([x,y])=[d(x),y] + [x, d(y)], \ \forall x, y \in L.
\end{equation}

The Leibniz identity implies that the right multiplication operator $\mathcal{R}_x \colon L \to L, \mathcal{R}_x(y)=[y,x],  y \in L$, is a derivation.

For a given Leibniz algebra $L$ \emph{the lower central}  and \emph{derived series} defined as follows:
\[L^1=L, \ L^{k+1}=[L^k,L],  \ k \geq 1, \qquad \qquad
L^{[1]}=L, \ L^{[s+1]}=[L^{[s]},L^{[s]}], \ s \geq 1,\]
respectively.

\begin{defn}
 A Leibniz algebra $L$ is said to be
\emph{nilpotent} (respectively, \emph{solvable}), if there exists $k\in\mathbb N$ ($s\in\mathbb N$) such that $L^{k}=\{0\}$ (respectively, $L^{[s]}=\{0\}$).
 The minimal number $k$ with such property is said to be the \emph{index of nilpotency} of the algebra $L$.
\end{defn}

Clearly, the index of nilpotency of an $n$-dimensional nilpotent Leibniz algebra is not greater than $n+1$.

Recall, the maximal nilpotent ideal of a Leibniz algebra is said to be the \emph{nilradical} of the  algebra.

Let $R$ be a solvable Leibniz algebra with a nilradical $N$. We denote by $Q$ the complementary vector space of the nilradical $N$ to the algebra $R$. Let us consider the restrictions to $N$ of the right multiplication operator on an element $x \in Q$ (denoted by $\mathcal{R}_{{x |}_{N}}$).
  From \cite{Nulfilrad} we know that for any $x \in Q$, the operator $\mathcal{R}_{{x |}_{N}}$ is a non-nilpotent  derivation of $N$.

Let $\{x_1, \dots, x_m\}$ be a basis of $Q$, then for any scalars $\{\alpha_1, \dots, \alpha_m\}\in
\mathbb{C}\setminus\{0\}$, the matrix $\alpha_1\mathcal{R}_{{x_1 |}_{N}}+\dots+\alpha_m\mathcal{R}_{{x_m|}_{N}}$ is non-nilpotent, which means that the operators $\{\mathcal{R}_{{x_1 |}_{N}}, \dots, \mathcal{R}_{{x_m |}_{N}}\}$ are nil-independent \cite{Mub}. Therefore, we have that the dimension of $Q$ is bounded by the maximal number of nil-independent derivations of the nilradical $N$ (see \cite[Theorem 3.2]{Nulfilrad}).

Below we define the notion of quasi-filiform Leibniz algebra.

\begin{defn}
An $n$-dimensional Leibniz algebra is called \emph{quasi-filiform} if its index of nilpotency is equal
to $n-1$.
\end{defn}

%Now we introduce the notation for the type of connected gradation which we are going to consider over the nilpotent Leibniz algebras.

%We are focused in nilpotent algebras admitting the greatest number of possible of subspaces. Thus, among all nilpotent Leibniz algebras we   restrict our attention to the filiform and quasi-filiform cases.

A Leibniz algebra $L$ is $\mathbb{Z}$-graded, if $L =\bigoplus_{i\in \mathbb{Z}}V_i$, where $[V_i, V_j]\subseteq V_{i+j}$ for any $i, j \in \mathbb{Z}$ with a finite number of non-null spaces $V_i$.

We say that a nilpotent Leibniz algebra $L$ admits \emph{the connected gradation} $L = V_{k_1}\oplus\dots\oplus V_{k_t}$, if $V_{k_i}\neq\{0\}$ for any $i \  (1\leq i\leq t)$.

\begin{defn}
The number $l(\oplus L)=l(V_{k_1}\oplus\dots\oplus V_{k_t})=k_t-k_1 + 1$ is called \emph{the
length of gradation}. A gradation is called \emph{of maximum length}, if $l(\oplus L) = \dim(L)$.

\end{defn}

We denote by $l(L) = \max\{l(\oplus L)\ \text{such that}\  L = V_{k_1}\oplus\dots\oplus V_{k_t}\  \text{is a connected gradation}\}$ \emph{the length of an algebra} $L$.

\begin{defn}
 A Leibniz algebra $L$ is called of maximum length if $l(L) = \dim(L)$.
\end{defn}

In the next theorem we present the classification of quasi-filiform Lie algebras of maximum length given in \cite{Gomez}.

\begin{thm} \label{thm26}
 Let $L$ be an $n$-dimensional quasi-filiform Lie algebra
of maximum length. Then the algebra $L$ is isomorphic to one of the following pairwise non-isomorphic algebras:

\[g^1_{(n,1)}:\left\{\begin{array}{ll}
[e_1,e_i]=e_{i+1},& 2\leq i\leq n-2,\\[1mm]
[e_i,e_{n-i}]=(-1)^ie_{n},& 2\ \leq i\leq  \frac{n-1}{2}, \ n\geq5 \ \  and \ \ n \  is \  odd;\\[1mm]
\end{array}\right.\]

\[g^2_{(n,1)}:\left\{\begin{array}{ll}
[e_1,e_i]=e_{i+1},& 2\leq i\leq n-2,\\[1mm]
[e_i,e_n]=e_{i+2},& 2\leq i\leq n-3, \ n\geq5; \\[1mm]
\end{array}\right. \quad g^3_{(n,1)}:\left\{\begin{array}{ll}
[e_1,e_i]=e_{i+1},& 2\leq i\leq {n-2},\\[1mm]
[e_i,e_{n}]=e_{i+2},& 2\leq i\leq {n-3}\\[1mm]
[e_2,e_i]=e_{i+3}, & 3\leq i \leq {n-4}, \ n\geq7;\\[1mm]
\end{array}\right.\]
\[g^1_{7}:\left\{\begin{array}{ll}
[e_1,e_i]=e_{i+1},& 2\leq i\leq 5,\\[1mm]
[e_2,e_i]=e_{i+2},& 3\leq i\leq 4,\\[1mm]
[e_i,e_{7-i}]=(-1)^ie_{7},& 2\leq i\leq 3;\\[1mm]
\end{array}\right. \quad g^2_{9}:\left\{\begin{array}{ll}
[e_1,e_i]=e_{i+1},& 2\leq i\leq 7,\\[1mm]
[e_2,e_i]=e_{i+2},& 3\leq i\leq 4 ,\\[1mm]
[e_2,e_5]=3e_7,\\[1mm]
[e_2,e_6]=5e_8,\\[1mm]
[e_3,e_i]=-2e_{i+3},&4\leq i\leq 5,\\[1mm]
[e_i,e_{9-i}]=(-1)^ie_{9},&2\leq i \leq 4;\\[1mm]
\end{array}\right.\]
\[g^3_{11}:\left\{\begin{array}{ll}
[e_1,e_i]=e_{i+1},& 2\leq i\leq 9,\\[1mm]
[e_2,e_i]=e_{i+2},& 3\leq i\leq 4 ,\\[1mm]
[e_2,e_i]=-e_{i+2},& 6\leq i\leq 7\\[1mm]
[e_3,e_7]=-e_{10},\\[1mm]
[e_3,e_i]=e_{i+3},&4\leq i\leq 5,\\[1mm]
[e_4,e_i]=e_{i+4},&5\leq i\leq 6,\\[1mm]
[e_i,e_{11-i}]=(-1)^ie_{11},&2\leq i \leq 5,\\[1mm]
\end{array}\right.\]
where $\{e_1,e_2,\dots,e_{n}\}$ is a basis of the algebra.
\end{thm}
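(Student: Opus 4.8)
The statement is a classification up to isomorphism, so the plan is to turn the two hypotheses --- quasi-filiform and maximum length --- into a rigid combinatorial skeleton for an adapted basis, and then use the Jacobi (Lie) identity to pin down the finitely many surviving structure constants. First I would exploit maximum length. Since $l(L)=\dim L = n$, there is a connected gradation $L = V_{k_1}\oplus\cdots\oplus V_{k_t}$ with $k_t - k_1 + 1 = n$; because the $t$ nonzero homogeneous components sit in a window of only $n$ consecutive integers while their dimensions sum to $n$, every $V_{k_i}$ must be one-dimensional and the weights must be exactly the $n$ consecutive integers $k_1, k_1+1, \dots, k_1+n-1$. Equivalently, $L$ carries a semisimple derivation whose spectrum is, after rescaling, $n$ simple eigenvalues forming consecutive integers. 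Fixing a homogeneous basis $\{e_1,\dots,e_n\}$ adapted to this grading, the bracket is forced to be weight-additive: for each pair $i,j$ the element $[e_i,e_j]$ is either zero or a single scalar multiple of the unique basis vector whose weight equals $w_i + w_j$ (when that weight falls inside the window). Thus all structure constants collapse to at most one scalar per pair, and the problem becomes finite.

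Next I would bring in the quasi-filiform and nilpotency structure to fix the backbone. The characteristic sequence of a quasi-filiform nilpotent Lie algebra is $(n-2,1,1)$, so there is a generic element whose adjoint operator has a Jordan chain of length $n-2$; taking it to be the weight generator $e_1$ yields the principal relations $[e_1,e_i]=e_{i+1}$ along a maximal chain and identifies which weights are indecomposable (the generators). Counting generators via $\dim L/[L,L]$ then splits the analysis into the two- and three-generator situations, which is exactly what separates the families with $L^2 = \spann\{e_3,\dots,e_n\}$ (as in $g^1_{(n,1)}$) from those in which $e_n$ is itself a generator (as in $g^2_{(n,1)}$ and $g^3_{(n,1)}$). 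Within each case the weight window is essentially determined, and the only freedom left is the value of the scalars attached to the off-backbone brackets such as $[e_i,e_{n-i}]$, $[e_i,e_n]$, and $[e_2,e_i]$.

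At this point I would impose the Jacobi identity on all homogeneous triples. Weight-additivity makes most triples vacuous, so only a bounded number of genuine relations survive; these are polynomial equations in the surviving scalars, and they propagate the seed constants along the chain via $\operatorname{ad}_{e_1}$. Solving them and then using the residual gauge freedom --- the weight-preserving basis changes, essentially the rescalings $e_i \mapsto \lambda_i e_i$ compatible with the grading --- to normalize nonzero scalars to canonical values such as $\pm 1$ should collapse each case to one of the generic families $g^1_{(n,1)}, g^2_{(n,1)}, g^3_{(n,1)}$. Finally I would isolate the sporadic algebras $g^1_7, g^2_9, g^3_{11}$: these occur only in dimensions $7,9,11$ because only there do extra weight coincidences inside the window permit additional nonzero brackets (for instance the $[e_3,e_i]$ and $[e_4,e_i]$ products in $g^3_{11}$) without violating Jacobi, so the small-$n$ cases must be handled separately.

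I expect the main obstacle to be precisely this interaction between the generic families and the low-dimensional exceptions: the weight bookkeeping is uniform in $n$ only away from small dimensions, and certifying that the list is simultaneously complete and irredundant forces a careful finite check (for $n=7,9,11$ and the first few admissible values of each parity) to confirm that no further sporadic solution of the Jacobi equations has been overlooked and that no two members of the list are related by a weight-preserving change of basis. The subsequent separation of the families is comparatively routine, relying on invariants such as the characteristic sequence, $\dim L/[L,L]$, the dimensions of the terms of the lower central series, and the parity constraint ($n$ odd for $g^1_{(n,1)}$).
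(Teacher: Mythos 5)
First, a point of comparison that matters for this review: the paper does not prove this statement at all --- Theorem \ref{thm26} is imported from the classification of G\'omez, Jim\'enez-Merch\'an and Reyes \cite{Gomez}, so there is no internal proof to measure your argument against. Judged on its own merits, your outline begins correctly and is consistent with the method of the cited source: connectedness of the gradation together with $l(\oplus L)=n$ does force $n$ consecutive nonzero homogeneous components each of dimension one (note that your counting argument silently uses connectedness --- with gaps allowed in the weight support the conclusion would fail), hence a diagonalizable derivation with simple consecutive integer eigenvalues and weight-additive brackets carrying at most one scalar per pair; and the bifurcation according to the number of generators, $\dim L/L^2\in\{2,3\}$, is indeed what separates $g^1_{(n,1)}$ from $g^2_{(n,1)}$ and $g^3_{(n,1)}$.

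The genuine gap is where you defer completeness to ``a careful finite check (for $n=7,9,11$ and the first few admissible values of each parity).'' A finite verification in small dimensions cannot certify that the sporadic solutions of the Jacobi system stop at $n=11$, nor that the generic families exhaust all solutions for every larger $n$: for each $n$ the Jacobi identities give a polynomial system in on the order of $n$ scalars, and the theorem requires a uniform-in-$n$ argument --- an induction or parametric analysis of the recursions showing that, once the backbone $[e_1,e_i]=e_{i+1}$ and the first off-backbone constants are normalized, all remaining constants are forced, with the exceptional branches of the recursion closing only in dimensions $7$, $9$, $11$. This is exactly the hard content of \cite{Gomez}, where it is handled by first enumerating the admissible gradation types (the possible positions of the second generator and of $e_n$ in the weight window; a priori the support need not be $\{1,\dots,n\}$ and could contain nonpositive weights, a case your sketch does not exclude) and then solving the resulting constraints parametrically in $n$. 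Relatedly, you assert rather than prove that the characteristic element realizing the sequence $(n-2,1,1)$ can be chosen homogeneous with a full chain $[e_1,e_i]=e_{i+1}$; the compatibility of the characteristic sequence with the maximum-length grading is itself a step that must be argued. In short, your skeleton is the right one, but as written it is a program whose two decisive steps --- normalization of the weight support and uniform completeness across all $n$ --- are missing.
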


We call a vector space $M$ a {\it module over a Leibniz algebra} $L$ if there are two bilinear maps:
$$[-,-] \colon L\times M \rightarrow M \qquad \text{and} \qquad [-,-] \colon M\times L \rightarrow M$$
satisfying the following three axioms
\begin{align*}
[m,[x,y]] & =[[m,x],y]-[[m,y],x],\\
[x,[m,y]] & =[[x,m],y]-[[x,y],m],\\
[x,[y,m]] & =[[x,y],m]-[[x,m],y],
\end{align*}
for any $m\in M$, $x, y \in L$.

For a Leibniz algebra $L$ and module $M$ over $L$ we consider the spaces
$$CL^0(L,M) = M, \quad CL^n(L,M)=\Hom(L^{\otimes n}, M), \ n > 0.$$

Let $d^n : CL^n(L,M) \rightarrow CL^{n+1}(L,M)$ be an
$F$-homomorphism defined by
 \begin{multline*}
(d^n\varphi)(x_1, \dots , x_{n+1}): = [x_1,\varphi(x_2,\dots,x_{n+1})]
+\sum\limits_{i=2}^{n+1}(-1)^{i}[\varphi(x_1,
\dots, \widehat{x}_i, \dots , x_{n+1}),x_i]\\
+\sum\limits_{1\leq i<j\leq {n+1}}(-1)^{j+1}\varphi(x_1, \dots,
x_{i-1},[x_i,x_j], x_{i+1}, \dots , \widehat{x}_j, \dots
,x_{n+1}),
\end{multline*}
 where $\varphi\in CL^n(L,M)$ and $x_i\in L$. The property $d^{n+1}d^n=0$ leads that the derivative
operator $d=\sum\limits_{i \geq 0}d^i$ satisfies the property
$d\circ d = 0$. Therefore, the $n$-th cohomology group is well defined by
$$HL^n(L,M): = ZL^n(L,M)/ BL^n(L,M),$$
where the elements $ZL^n(L,M):=\Ker d^{n+1}$ and $BL^n(L,M):=\Img d^n$ are called {\it
$n$-cocycles} and {\it $n$-coboundaries}, respectively.

Although in general the computation of cohomology  groups is complicated, the
Hochschild-Serre factorization theorem simplifies its computation for semidirect sums
of algebras \cite{Leger}. If $R=N\dot{+}Q$ is a solvable Lie algebra such that $Q$ is abelian and the operators $R_x$ for all $x\in Q$ are diagonal.
Then the adjoint Lie $n$-th cohomology $H^n(R,R)$ satisfies the following isomorphism \cite{AnCaGa3,AnCaGa4}

\begin{equation}\label{eq1}
H^n(R,R)\simeq \sum\limits_{a+b=n}H^a(Q,F)\otimes H^b(N,R)^Q,
\end{equation}
where
\begin{equation} \label{eq2}
 H^b(N,R)^Q=\{\varphi\in H^b(N,R) \mid (d^b\varphi)(x,z_1,\dots,z_b)=0, \ x\in Q, \ z_i\in N\}.
 \end{equation}

The linear reductive group $\GL_n(\mathbb{F})$ acts on $\Leib_n$, the variety of $n$-dimensional Leibniz algebra structures, via change of basis, i.e.
\[(g*\lambda)(x,y)=g \Big(\lambda \big(g^{-1}(x),g^{-1}(y) \big) \Big), \quad  g \in \GL_n(\mathbb{F}), \  \lambda \in \Leib_n.\]

The orbits $\Orb(-)$ under this action are the isomorphism classes of algebras. Recall, Leibniz algebras with open orbits are called \emph{rigid}. The importance of rigid algebras follows from the following fact: the closure of orbit of rigid algebra forms an irreducible component of the variety and the variety is the union of finite numbers of irreducible components.

%Note that solvable Leibniz algebras of the same dimension also form an invariant subvariety of the variety of Leibniz algebras under the mentioned action.
Here we give a result, which asserts that triviality of the second cohomology of a Leibniz algebra implies its rigidity.

\begin{thm}[\cite{Balavoine}] \label{Bal}
If the second cohomology of a Leibniz algebra $HL^2(L,L)=\{0\}$, then it is a rigid algebra.
\end{thm}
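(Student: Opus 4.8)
The plan is to prove rigidity geometrically, by comparing the tangent space to the orbit $\Orb(L)$ with the (Zariski) tangent space to the variety $\Leib_n$ at the point $L$, and then invoking the standard principle that an orbit whose tangent space fills the ambient tangent space must be open. First I would identify the tangent space to the orbit. The orbit is the image of the morphism $\GL_n(\mathbb{F}) \to \Leib_n$, $g \mapsto g * L$. Differentiating the action formula $(g*\lambda)(x,y) = g(\lambda(g^{-1}x, g^{-1}y))$ at $g = \mathrm{id}$ along $\phi \in \mathfrak{gl}_n = \Hom(L,L)$ produces the bilinear map $(x,y) \mapsto \phi([x,y]) - [\phi(x), y] - [x, \phi(y)]$, which up to sign is exactly $(d^1\phi)(x,y)$. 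Hence $T_L\Orb(L) = \Img d^1 = BL^2(L,L)$.

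Next I would identify the tangent space to the variety. Since $\Leib_n$ is cut out by the Leibniz identity $\mathcal{L}(x,y,z) = 0$, a tangent vector at $L$ is a bilinear map $\mu$ for which the deformed bracket $[\cdot,\cdot] + t\mu$ satisfies the Leibniz identity modulo $t^2$. Expanding $\mathcal{L}$ for this bracket and collecting the first-order term in $t$ yields precisely the $2$-cocycle condition $d^2\mu = 0$, so $T_L\Leib_n = ZL^2(L,L)$. Because the orbit sits inside the variety, one always has $BL^2(L,L) = T_L\Orb(L) \subseteq T_L\Leib_n = ZL^2(L,L)$, and the hypothesis $HL^2(L,L) = \{0\}$ forces these two subspaces to coincide.

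Finally I would run the dimension and openness argument. The orbit is a smooth irreducible locally closed subvariety, so $\dim \Orb(L) = \dim_{\mathbb{C}} T_L\Orb(L)$, while for any variety $\dim_L \Leib_n \leq \dim_{\mathbb{C}} T_L\Leib_n$. Combining these with the equality $T_L\Orb(L) = T_L\Leib_n$ gives the chain $\dim \Orb(L) = \dim_{\mathbb{C}} T_L\Orb(L) = \dim_{\mathbb{C}} T_L\Leib_n \geq \dim_L\Leib_n \geq \dim\Orb(L)$, which is forced to be an equality throughout. Consequently $L$ is a smooth point of $\Leib_n$, and the irreducible closure $\overline{\Orb(L)}$ has the same dimension as the unique irreducible component through $L$; as the orbit is open in its own closure, it is therefore open in $\Leib_n$, i.e. $L$ is rigid.

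I expect the main obstacle to be the two tangent-space identifications, and in particular the bookkeeping that makes $T_L\Leib_n = ZL^2(L,L)$ rather than merely an inclusion: one must check that the tangent space read off from the defining equations really is the full cocycle space and that reducedness subtleties do not open a gap, since only the containment $BL^2 \subseteq ZL^2$ is automatic. The conceptual content behind the geometry is the deformation-theoretic reading of $HL^2(L,L)=\{0\}$: every infinitesimal deformation is a coboundary $d^1\phi$, hence arises from an honest change of basis and is trivial, so $L$ admits no nontrivial infinitesimal deformations. The crux is then the passage from this linear-algebraic (cohomological) vanishing to the Zariski-topological conclusion that the orbit is open, which is exactly what the tangent-space comparison supplies.
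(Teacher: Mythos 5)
The paper contains no proof of this statement to compare against: Theorem \ref{Bal} is imported verbatim from Balavoine's paper \cite{Balavoine}, with the citation standing in for the argument. What you have written is a correct reconstruction of the standard geometric proof (the Nijenhuis--Richardson argument for Lie algebras, which is what Balavoine transplants to the Leibniz setting): differentiating the orbit map $g\mapsto g*L$ at the identity identifies $T_L\Orb(L)$ with $BL^2(L,L)$ (with the paper's conventions, $(d^1\phi)(x,y)=[\phi(x),y]+[x,\phi(y)]-\phi([x,y])$, so your formula is indeed this up to overall sign, as you note); linearizing the Leibniz identity along a first-order deformation $[\cdot,\cdot]+t\mu$ gives the cocycle condition; the hypothesis $HL^2(L,L)=\{0\}$ collapses the resulting sandwich; and the dimension count plus smoothness of the orbit (orbits of algebraic groups over $\mathbb{C}$ are smooth locally closed subvarieties) yields openness.

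Two refinements are worth making. First, the obstacle you single out --- establishing the equality $T_L\Leib_n=ZL^2(L,L)$ --- is both unnecessary and, for the reduced variety, potentially false: it is the \emph{scheme-theoretic} tangent space of the Leibniz-identity equations that equals $ZL^2(L,L)$, while the tangent space of the reduced variety is only contained in it. But the proof needs nothing more than the automatic inclusions $BL^2(L,L)=T_L\Orb(L)\subseteq T_L\Leib_n\subseteq ZL^2(L,L)$; the hypothesis $BL^2=ZL^2$ then forces equality throughout, so your worry sits on the harmless side of the inclusion and the chain should simply be rewritten with $\subseteq$ in the middle. Second, the final step ``open in its own closure, therefore open in $\Leib_n$'' skips one point: openness in the closure plus the closure being a component does not by itself rule out other components meeting the orbit. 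The fix is the smoothness you already established, propagated along the orbit by $\GL_n$-homogeneity: every point of $\Orb(L)$ is a smooth point of $\Leib_n$, hence lies on a unique irreducible component, namely $\overline{\Orb(L)}$, so a Zariski neighborhood in $\Leib_n$ of each orbit point is contained in $\overline{\Orb(L)}$, and intersecting with openness in the closure gives openness in $\Leib_n$. With these two adjustments your argument is complete and is, in substance, the proof behind the cited result.
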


\section{Solvable Leibniz algebras with quasi-filiform Lie algebras of maximum length nilradicals.}

In this section we describe solvable Leibniz algebras whose nilradical is a quasi-filiform Lie algebra of maximum length.
 The classification of solvable Leibniz algebras with five-dimensional quasi-filiform Lie algebras of maximal length already obtained in \cite{aloberdi}. Therefore, we   focus to the case of nilradical is a quasi-filiform Lie algebra of maximum length of dimension greater than five.

For a solvable Leibniz algebra $R=N\oplus Q$ with nilradical $N$ and $s$-dimensional complemented space $Q$ we   use notation $R(N,s)$.

In order to start the description we need to know the derivations of quasi-filiform Lie algebras of maximum length.

\subsection{Derivations of algebras $g^i, i=1, 2, 3$}

\

\

In this subsection we describe the spaces of derivations of quasi-filiform Lie algebras of maximum length.

\begin{prop} \label{prop1}
Any derivation of quasi-filiform Lie algebras of maximum length has the following form:

for the algebra $g^1_{(n,1)}$,

$$\begin{cases}
d(e_1)=\sum\limits_{t=1}^na_{t}e_t, \\[1mm]
d(e_2)=\sum\limits_{t=2}^{n}b_{t}e_t, \\[1mm]
d(e_i)=((i-2)a_{1}+b_{2})e_i+\sum\limits_{t=i+1}^{n-1}b_{t-i+2}e_t+(-1)^i a_{n-i+1}e_n,\ 3\leq i\leq n-1,\\[1mm]
d(e_n)=((n-4)a_{1}+2b_{2})e_n, \\[1mm]
\end{cases}$$
where $b_{2k}=0,\ 2\leq k\leq \frac{n-3}{2};$

for the algebra $g^2_{(n,1)}$,

$$\begin{cases}
d(e_1)=\sum\limits_{t=1}^{n-1}a_{t}e_t, \\[1mm]
d(e_i)=((i-2)a_{1}+b_{2})e_i+\sum\limits_{t=i+1}^{n-1}b_{t-i+2}e_t,\ 2\leq i\leq n-1,\\[1mm]
d(e_n)=-\sum\limits_{t=3}^{n-2}a_{t-1}e_t+c_{n-1}e_{n-1}+2a_{1}e_{n}, \\[1mm]
\end{cases}$$

for the algebra $g^3_{(n,1)}$,
$$\left\{\begin{array}{ll}
d(e_1)=a_1e_1+\sum\limits_{t=3}^{n-1}a_te_t,\\[1mm]
d(e_2)=3a_1e_2+\sum\limits_{t=3}^{n-1}b_te_t,\\[1mm]
d(e_i)=(i+1)a_1e_i+b_3e_{i+1}+b_4e_{i+2}+\sum\limits_{j=i+3}^{n-1}(b_{j-i+2}-a_{j-i})e_{j},&3\leq i\leq n-1,\\[1mm]
d(e_n)=-\sum\limits_{t=4}^{n-2}a_{t-1}e_t+c_{n-1}e_{n-1}+2a_1e_n,\\[1mm]
\end{array}\right.$$

for the algebra $g^1_{7}$,
$$\left\{\begin{array}{ll}
d(e_1)=a_{1}e_1+a_{2}e_2+a_{3}e_3+a_{4}e_4+a_{5}e_5+a_{6}e_6+a_{7}e_7,\\[1mm]
d(e_2)=2a_{1}e_2+b_{3}e_3+b_{5}e_5+b_{6}e_6+b_{7}e_7,\\[1mm]
d(e_3)=3a_{1}e_3+b_{3}e_4-a_{3}e_5+(b_{5}-a_{4})e_6-a_{5}e_7,\\[1mm]
d(e_4)=4a_{1}e_4+b_{3}e_5-a_{3}e_6+a_{4}e_7,\\[1mm]
d(e_5)=5a_{1}e_5+b_{3}e_6-a_{3}e_7,\\[1mm]
d(e_6)=6a_{1}e_6,\\[1mm]
d(e_7)=7a_{1}e_7,\\[1mm]
\end{array}\right.$$

for the algebra $g^2_{9}$,

$$\left\{\begin{array}{ll}
d(e_1)=a_{1}e_1+a_{3}e_3+a_{4}e_4+a_{5}e_5+a_{6}e_6+a_{7}e_7+a_{8}e_8+a_{9}e_9,\\[1mm]
d(e_2)=2a_{1}e_2+b_{3}e_3+a_{4}e_5+a_{5}e_6+b_{7}e_7+b_{8}e_8+b_{9}e_9,\\[1mm]
d(e_3)=3a_{1}e_3+b_{3}e_4-a_{3}e_5-2a_{5}e_7+(b_{7}-5a_{6})e_8-a_{7}e_9,\\[1mm]
d(e_4)=4a_{1}e_4+b_{3}e_5-a_{3}e_6+2a_{4}e_7+a_{6}e_9,\\[1mm]
d(e_5)=5a_{1}e_5+b_{3}e_6-3a_{3}e_7+2a_{4}e_8-a_{5}e_9,\\[1mm]
d(e_6)=6a_{1}e_6+b_{3}e_7-5a_{3}e_8+a_{4}e_9,\\[1mm]
d(e_7)=7a_{1}e_7+b_{3}e_8-a_{3}e_9,\\[1mm]
d(e_8)=8a_{1}e_8,\\[1mm]
d(e_9)=9a_{1}e_9,\\[1mm]
\end{array}\right.$$

for the algebra $g^3_{11}$,

$$\left\{\begin{array}{ll}
d(e_1)=a_{1}e_1+a_{3}e_3+a_{4}e_4+a_{5}e_5+a_{6}e_6+a_{7}e_7+a_{8}e_8+a_{9}e_9+a_{10}e_{10}+a_{11}e_{11},\\[1mm]
d(e_2)=2a_{1}e_2+b_{3}e_3+a_{4}e_5+a_{5}e_6+b_{7}e_7-a_{7}e_8+b_{9}e_9+b_{10}e_{10}+b_{11}e_{11},\\[1mm]
d(e_3)=3a_{1}e_3+b_{3}e_4-a_{3}e_5+a_{5}e_7+(b_{7}+a_{6})e_8-b_{9}e_9-a_{9}e_{11},\\[1mm]
d(e_4)=4a_{1}e_4+b_{3}e_5-a_{3}e_6-a_{4}e_7+(a_{6}+b_{7})e_9+a_{7}e_{10}+a_{8}e_{11},\\[1mm]
d(e_5)=5a_{1}e_5+b_{3}e_6-a_{4}e_8-a_{5}e_9+b_{7}e_{10}-a_{7}e_{11},\\[1mm]
d(e_6)=6a_{1}e_6+b_{3}e_7+a_{3}e_8-a_{5}e_{10}+a_{6}e_{11},\\[1mm]
d(e_7)=7a_{1}e_7+b_{3}e_8+a_{3}e_9+a_{4}e_{10}-a_{5}e_{11},\\[1mm]
d(e_8)=8a_{1}e_8+b_{3}e_9+a_{4}e_{11},\\[1mm]
d(e_9)=9a_{1}e_9+b_{3}e_{10}-a_{3}e_{11},\\[1mm]
d(e_{10})=10a_{1}e_{10},\\[1mm]
d(e_{11})=11a_{1}e_{11},\\[1mm]
\end{array}\right.$$

\end{prop}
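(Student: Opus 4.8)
The plan is to compute, for each of the six quasi-filiform Lie algebras $g^i$ listed in Theorem~\ref{thm26}, the full derivation algebra $\Der(g^i)$ by directly solving the defining equation~\eqref{eq0}. Since each $g^i$ is generated by the two elements $e_1$ and $e_2$ (all higher basis vectors $e_3,\dots,e_n$ arise as iterated brackets, e.g. $e_{i+1}=[e_1,e_i]$), a derivation $d$ is completely determined by the images $d(e_1)$ and $d(e_2)$ together with the derivation rule. First I would write $d(e_1)=\sum_{t=1}^n a_t e_t$ and $d(e_2)=\sum_{t=2}^n b_t e_t$ as unknown linear combinations, and then use~\eqref{eq0} to propagate these to $d(e_i)$ for every $i\geq 3$ via the relations defining $e_i$ as a bracket.

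The key computational step is to exploit the $\mathbb{Z}$-gradation: since these algebras are of maximum length, assigning weight $i$ to $e_i$ makes each algebra graded, and the bracket relations are weight-homogeneous. Applying $d$ to the generating relation $[e_1,e_i]=e_{i+1}$ gives the recursion $d(e_{i+1})=[d(e_1),e_i]+[e_1,d(e_i)]$, which expresses $d(e_{i+1})$ in terms of $d(e_1)$ and $d(e_i)$; iterating this recursion yields the stated closed form for $d(e_i)$, in which the diagonal coefficient grows linearly in $i$ (the $(i-2)a_1+b_2$ type term) because the weight of $e_i$ increases by one at each step. For the algebras $g^1_{(n,1)}$, $g^2_{(n,1)}$, and $g^3_{(n,1)}$ the argument is uniform in $n$ and produces the general formulas; for the sporadic algebras $g^1_7$, $g^2_9$, $g^3_{11}$ the same procedure is carried out in the fixed dimension.

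The main obstacle, and the place where the constraints genuinely bite, is that the secondary bracket relations must also be respected by $d$. For instance in $g^1_{(n,1)}$ the relations $[e_i,e_{n-i}]=(-1)^i e_n$ must be compatible with the images already forced by the chain relations; applying $d$ to these and matching coefficients imposes linear constraints that both pin down $d(e_n)$ (giving the $((n-4)a_1+2b_2)e_n$ term, whose coefficient is again dictated by the total weight $n$) and force certain parameters to vanish, namely the condition $b_{2k}=0$ for $2\leq k\leq\frac{n-3}{2}$. Analogous secondary relations (the $[e_2,e_i]$, $[e_3,e_i]$, $[e_i,e_{n-i}]$ families) in the remaining algebras produce the corresponding vanishing and linking conditions among the $a_t$, $b_t$, $c_{n-1}$. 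The verification that these constraints are not only necessary but also sufficient — that is, that every parameter choice satisfying them actually yields a well-defined derivation — requires checking consistency across all overlapping relations simultaneously; I expect this compatibility bookkeeping, rather than any single recursion, to be the delicate part of the proof.
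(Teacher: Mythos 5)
Your overall strategy---write $d(e_1)$ and $d(e_2)$ with unknown coefficients, propagate along $[e_1,e_i]=e_{i+1}$ via the derivation rule, then impose the remaining bracket relations to force $b_{2k}=0$ and pin down $d(e_n)$---is precisely what the paper does (it carries this out for $g^1_{(n,1)}$ and declares the other five algebras analogous). But your plan rests on a premise that is false for two of the six families: you assert that each algebra is generated by $e_1$ and $e_2$, with ``all higher basis vectors $e_3,\dots,e_n$'' arising as iterated brackets. That holds for $g^1_{(n,1)}$, $g^1_7$, $g^2_9$, $g^3_{11}$, where $e_n=[e_2,e_{n-2}]$ (resp.\ $e_7=[e_2,e_5]$, $e_9=[e_2,e_7]$, $e_{11}=[e_2,e_9]$), but it fails for $g^2_{(n,1)}$ and $g^3_{(n,1)}$: there $e_n$ never occurs on the right-hand side of any bracket, so $[L,L]=\langle e_3,\dots,e_{n-1}\rangle$ and the algebra has \emph{three} generators $e_1,e_2,e_n$. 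Your recursion therefore can never produce $d(e_n)$ for these families, and the statement itself witnesses the problem: $d(e_n)$ there contains the free parameter $c_{n-1}$, which is not a function of the $a_t,b_t$---impossible if $e_n$ were generated by $e_1,e_2$. The repair is routine but must be made: introduce $d(e_n)=\sum_t c_t e_t$ as an independent unknown and constrain it by differentiating $[e_i,e_n]=e_{i+2}$ together with the vanishing brackets (e.g.\ $0=d([e_1,e_n])$ gives $c_t=-a_{t-1}$ for $3\le t\le n-2$ and $c_2=0$, while $d([e_2,e_n])=d(e_4)$ gives $c_n=2a_1$ and $c_1=0$, leaving $c_{n-1}$ free, exactly as in the proposition).

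The same blind spot infects your heuristic: assigning weight $i$ to $e_i$ does \emph{not} grade $g^2_{(n,1)}$ or $g^3_{(n,1)}$, since $[e_i,e_n]=e_{i+2}$ is not homogeneous under that assignment; the maximum-length gradation for these algebras gives $e_n$ weight $2$ and $e_i$ weight $i+1$ for $2\le i\le n-1$, which is consistent with the diagonal coefficients $2a_1$ in $d(e_n)$ and $(i+1)a_1$ in $d(e_i)$ for $g^3_{(n,1)}$. Two smaller points: you build $b_1=0$ into the ansatz for $d(e_2)$, whereas it must be derived (the paper gets it from $0=d([e_2,e_i])$); and your concern about sufficiency is beside the point for the statement as formulated---the proposition claims only that every derivation \emph{has} the stated form, i.e.\ necessity, which is all the paper's proof establishes.
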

\begin{proof}
From Theorem \ref{thm26} we conclude that $e_1$ and $e_2$ are the generator basis elements of the algebra $g^{1}_{(n,1)}$.

We put
\[ d(e_1)=\sum\limits_{t=1}^{n}a_te_t,, \qquad d(e_{2})=\sum\limits_{t=1}^{n}b_te_t.\]

From the derivation property \eqref{eq0} we have

$$d(e_3)=d([e_1,e_2])=[d(e_1),e_2]+[e_1,d(e_2)]=(a_1+b_2)e_3+\sum\limits_{t=4}^{n-1}b_{t-1}e_t-a_{n-2}e_n.$$

By induction and the derivation property, we derive
$$d(e_i)=[(i-2)a_1+b_2]e_i+\sum\limits_{t=i+1}^{n-1}b_{t-i+2}e_t+(-1)^i a_{n-i+1}e_n,$$
where $3\leq i\leq n-1$.

From $0=d([e_2,e_i])=[d(e_2),e_i]+[e_2,d(e_i)], \ 3\leq i\leq n-3$, we conclude
$$b_1=0, \ (-1)^{i}b_{n-i}-b_{n-i}=0, \ \ 3\leq i\leq n-3.$$

Consequently,
$$b_{2k}=0, \ \ 2\leq k\leq \frac{n-3}{2}.$$

The equality $d(e_n)=d([e_2,e_{n-2}])$ implies
\[d(e_n)=d([e_2,e_{n-2}])=((n-4)a_1+2b_2)e_n.\]

The descriptions of derivations for other algebras are obtained by applying  similar calculations used for the algebra $g^{1}_{(n,1)}$.
\end{proof}

\subsection{Descriptions of algebras $R(g^{i}, 1), \ i=1, 2, 3$}

\

\

In this subsection we classify solvable Leibniz algebras with quasi-filiform Lie nilradicals of maximum length under the condition that the complemented space is one-dimensional.

\begin{thm} \label{thmmu1}
 An arbitrary algebra of the family $R(g^1_{(n,1)},1)$ admits a basis $\{e_1, e_2, \dots, e_{n},x\}$ such that its multiplications table has one of the following types:
$$R_1(g^1_{(n,1)},1)(a_2,b_5,\dots,b_{n-1}):\begin{cases}
[e_1,x]=-[x,e_1]=a_{2}e_2,\\[1mm]
[e_i,x]=-[x,e_i]=e_i+\sum\limits_{t=i+2}^{n-1}b_{t-i+2}e_t,\ 2\leq i\leq n-2,\\[1mm]
[e_{n-1},x]=-[x,e_{n-1}]=e_{n-1}+a_2e_n,\\[1mm]
[e_n,x]=-[x,e_n]=2e_n, \ \ \text{where} \ \  b_{2k}=0,\ 2\leq k\leq \frac{n-3}{2},\\[1mm]
\end{cases}$$
$$R_2(g^1_{(n,1)},1)(a_{n-1}):\begin{cases}
[e_1,x]=-[x,e_1]=e_1+a_{n-1}e_{n-1}, \\[1mm]
[e_i,x]=-[x,e_i]=(i+2-n)e_i,& 2\leq i\leq n-1,\\[1mm]
[e_n,x]=-[x,e_n]=(4-n)e_n, \\[1mm]
\end{cases}$$
$$R_3(g^1_{(n,1)},1)(\delta_{n-1}):\begin{cases}
[e_1,x]=-[x,e_1]=e_1, \\[1mm]
[e_i,x]=-[x,e_i]=(i+1-n)e_i,& 2\leq i\leq n-2,\\[1mm]
[e_n,x]=-[x,e_n]=(2-n)e_n, \\[1mm]
[x,x]=\delta_{n-1}e_{n-1},\\[1mm]
\end{cases}$$
$$R_4(g^1_{(n,1)},1)(\delta_{n}):\begin{cases}
[e_1,x]=-[x,e_1]=e_1, \\[1mm]
[e_i,x]=-[x,e_i]=(i-\frac{n}{2})e_i,& 2\leq i\leq n-1,\\[1mm]
[x,x]=\delta_{n}e_{n},\\[1mm]
\end{cases}$$
$$R_5(g^1_{(n,1)},1)(a_{n}):\begin{cases}
[e_1,x]=-[x,e_1]=e_1+a_ne_n, \\[1mm]
[e_i,x]=-[x,e_i]=(i+\frac{1-n}{2})e_i,\ &2\leq i\leq n-1,\\[1mm]
[e_n,x]=-[x,e_n]=e_n, \\[1mm]
\end{cases}$$
$$R_6(g^1_{(n,1)},1)(a_{2}):\begin{cases}
[e_1,x]=-[x,e_1]=e_1+a_{2}e_2, \\[1mm]
[e_i,x]=-[x,e_i]=(i-1)e_i,\ &2\leq i\leq n-2,\\[1mm]
[e_{n-1},x]=-[x,e_{n-1}]=(n-2)e_{n-1}+a_2e_n,\\[1mm]
[e_n,x]=-[x,e_n]=(n-2)e_n, \\[1mm]
\end{cases}$$
$$R_7(g^1_{(n,1)},1)(b_{2}):\begin{cases}
[e_1,x]=-[x,e_1]=e_1, \\[1mm]
[e_i,x]=-[x,e_i]=(i-2+b_{2})e_i,\ 2\leq i\leq n-1,\\[1mm]
[e_n,x]=-[x,e_n]=(n-4+2b_{2})e_n, \ b_2\notin \{4-n,\ 3-n,\ \frac{4-n}{2},\
\frac{5-n}{2},\ 1\}.\\[1mm]
\end{cases}$$
\end{thm}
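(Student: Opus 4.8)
The plan is to reconstruct the whole multiplication table of $R=g^1_{(n,1)}\oplus\langle x\rangle$ from the single operator $\mathcal{R}_{x|_N}$. By the discussion preceding Proposition \ref{prop1}, this operator is a non-nilpotent derivation of $N=g^1_{(n,1)}$, so by Proposition \ref{prop1} it has the form listed there; this already determines every right product $[e_i,x]=\mathcal{R}_x(e_i)$ in terms of the parameters $a_1,\dots,a_n,b_2,\dots,b_{n-1}$ (with $b_{2k}=0$). Its diagonal part has eigenvalues $a_1$ on $e_1$, $b_2$ on $e_2$, $(i-2)a_1+b_2$ on $e_i$ for $3\le i\le n-1$, and $(n-4)a_1+2b_2$ on $e_n$, so the entire classification is governed by the pair $(a_1,b_2)$.

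First I would normalize. We may replace $x$ by $\alpha x+z$ with $\alpha\in\mathbb{C}\setminus\{0\}$ and $z\in N$: scaling rescales the derivation while adding $z$ shifts it by the inner derivation $\mathcal{R}_{z|_N}$; moreover we may apply any automorphism of $N$, which conjugates the derivation. Non-nilpotency of $\mathcal{R}_{x|_N}$ forbids the diagonal part from vanishing, so either $a_1\ne0$, in which case we scale to $a_1=1$, or $a_1=0$ and then $b_2\ne0$, so we scale to $b_2=1$. Using the automorphisms of $N$ I would remove every \emph{non-resonant} off-diagonal entry of $\mathcal{R}_{x|_N}$, that is, every entry connecting two basis vectors with distinct diagonal eigenvalues; the surviving entries are exactly those joining vectors of equal eigenvalue.

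The case division then records the coincidences among the eigenvalues above. When $a_1=0,\ b_2=1$ all of $e_2,\dots,e_{n-1}$ carry the eigenvalue $1$, so a whole string $a_2,b_5,\dots,b_{n-1}$ of parameters persists: this is $R_1$. When $a_1=1$ the admissible off-diagonal terms and the admissible nonzero $[x,x]$ are forced by (i) a resonance of $e_1$ with some $e_j$, (ii) a resonance inside the pair $\{e_{n-1},e_n\}$, or (iii) the appearance of a zero eigenvalue, which opens room for a term $[x,x]=\delta e_k$. A direct computation shows these occur precisely at $b_2=4-n$ (giving the resonance $e_1\sim e_{n-1}$ of $R_2$), at $b_2=3-n$ (a zero eigenvalue on $e_{n-1}$, giving $R_3$ with parameter $\delta_{n-1}$), at $b_2=\tfrac{4-n}{2}$ (a zero eigenvalue on $e_n$, giving $R_4$ with parameter $\delta_n$), at $b_2=\tfrac{5-n}{2}$ (the resonance $e_1\sim e_n$ of $R_5$) and at $b_2=1$ (the resonances $e_1\sim e_2$ and $e_{n-1}\sim e_n$ of $R_6$), whereas a generic $b_2$ outside this finite set leaves only the diagonal form $R_7$, which accounts for the exclusion $b_2\notin\{4-n,3-n,\tfrac{4-n}{2},\tfrac{5-n}{2},1\}$.

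It remains to pin down the left products. Since $[e_i,x]+[x,e_i]\in\Ann_r(R)$, the identities $\mathcal{L}(x,e_i,e_j)=0$ and $\mathcal{L}(e_i,x,e_j)=0$ force $[x,e_i]=-[e_i,x]$ throughout, and they constrain $[x,x]$, which also lies in $\Ann_r(R)$, to the zero-eigenspace of $\mathcal{R}_{x|_N}$ modulo the image of the coboundary; this is what yields $\delta_{n-1}e_{n-1}$ in $R_3$, $\delta_n e_n$ in $R_4$, and $[x,x]=0$ in the remaining cases. I expect the main obstacle to be the resonance bookkeeping in the normalization step — showing that exactly the indicated off-diagonal entries cannot be absorbed by automorphisms of $N$ at each special value of $b_2$ — together with the routine but lengthy verification that each resulting table indeed satisfies $\mathcal{L}(\cdot,\cdot,\cdot)=0$.
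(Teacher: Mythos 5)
Your proposal follows essentially the same skeleton as the paper's proof: read off $[e_i,x]$ from Proposition~\ref{prop1}, observe $(a_1,b_2)\neq(0,0)$ by non-nilpotency and normalize $b_2=1$ or $a_1=1$, use automorphisms of $N$ together with shifts $x\mapsto x+z$ (inner derivations) to kill off-diagonal terms, and get $[x,e_i]=-[e_i,x]$ plus the constraints on $[x,x]$ from the Leibniz identity and the right annihilator. Your list of special values $b_2\in\{4-n,\,3-n,\,\frac{4-n}{2},\,\frac{5-n}{2},\,1\}$, with the resonances and zero eigenvalues you attach to them, matches the paper's Subcases 2.1--2.6 exactly, and your ``zero-eigenspace'' condition on $[x,x]$ is the paper's restriction \eqref{restrictions1} (though you suppress the cross-term $a_2\delta_{n-1}$, which the paper needs to kill $\delta_n$ in Subcase 2.2). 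The paper's explicit change \eqref{restrictions2} is precisely your resonance removal in Case 2.

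There are, however, two genuine problems. First, the load-bearing claim --- that automorphisms of $N$ remove \emph{every} non-resonant off-diagonal entry --- is exactly what must be proved, since $\mathrm{Aut}(N)$ is far smaller than $\GL_n$: one must exhibit, entry by entry, a derivation of $N$ of the right weight to exponentiate (these exist, by Proposition~\ref{prop1}) and then absorb the junk the conjugation creates; that concrete bookkeeping is the substance of the paper's proof, and you flag it but do not carry it out. Second, and worse, your own criterion contradicts your Case-1 conclusion. With $a_1=0$, $b_2=1$, the vector $e_1$ carries eigenvalue $0$ while $e_2$ carries eigenvalue $1$, so the entry $a_2$ (with its companion $e_{n-1}\mapsto e_n$, also of weight $1$) is \emph{non-resonant}, and your scheme must delete it --- it cannot ``persist'' as you assert. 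Indeed it genuinely can be deleted: the map $e_1\mapsto e_1+Ae_2$, $e_{n-1}\mapsto e_{n-1}+Ae_n$ is an automorphism of $g^1_{(n,1)}$ (the exponential of the $a_2$-derivation of Proposition~\ref{prop1}) and sends $a_2\mapsto a_2+A$; e.g.\ when $b_5=\dots=b_{n-1}=0$, the basis $f_1=e_1-a_2e_2$, $f_{n-1}=e_{n-1}-a_2e_n$ turns $R_1(g^1_{(n,1)},1)(a_2,0,\dots,0)$ into $R_1(g^1_{(n,1)},1)(0,\dots,0)$ outright, and for nonzero $b_t$ the junk generated is inner (proportional to $\mathcal{R}_{e_{t-1}}$) or removable by the central translations $e_1\mapsto e_1+\alpha e_n$, $e_2\mapsto e_2+\beta e_n$ the paper itself uses. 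So a correct application of your principle yields a strictly smaller normal form than $R_1(g^1_{(n,1)},1)(a_2,b_5,\dots,b_{n-1})$; that would still prove Theorem~\ref{thmmu1} (you would land in a subfamily of the listed forms), and it in fact exposes a redundancy in the paper's family $R_1$ (the subsequent invariant relation $a_2'=\frac{A_1a_2}{B_2}$ omits the $A_2$-contribution), but as written your argument asserts the survival of $a_2$ as a consequence of a principle that actually removes it, and that step is false and must be reworked one way or the other.
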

\begin{proof} Let $\{e_1, e_2, \dots, e_{n}\}$ be a basis of the algebra $g^1_{(n,1)}$ such that the table of multiplication in this basis has the form of Theorem \ref{thm26}. We know that the operator $\mathcal{R}_{{x |}_{g^1_{(n,1)}}}$ for $x\in Q$ is an non-nilpotent derivation of $g^1_{(n,1)}$. Taking into account the structure of the algebra $g^1_{(n,1)}$ we conclude
$\{e_{1},\dots, e_{n-2}\}\bigcap \Ann_r(R(g^1_{(n,1)}, 1))=\emptyset$. Moreover, we have $[x,e_i]+[e_i,x],\ [x,x] \in \Ann_r(R(g^1_{(n,1)}, 1))$. Now using the description of derivations of the algebra $g^1_{(n,1)}$ from Proposition \ref{prop1} we get the following products in $R(g^1_{(n,1)}, 1)$:
$$\begin{cases}
[e_1,x]=\sum\limits_{t=1}^na_{t}e_t, \\[1mm]
[e_2,x]=\sum\limits_{t=2}^{n}b_{t}e_t, \\[1mm]
[e_i,x]=((i-2)a_{1}+b_{2})e_i+\sum\limits_{t=i+1}^{n-1}b_{t-i+2}e_t+(-1)^i a_{n-i+1}e_n,\ 3\leq i\leq n-1,\\[1mm]
[e_n,x]=((n-4)a_{1}+2b_{2})e_n, \\[1mm]
[x,e_1]=-\sum\limits_{t=1}^{n-2}a_{t}e_t+\mu_{1,n-1}e_{n-1}+\mu_{1,n}e_{n},\\[1mm]
[x,e_i]=-((i-2)a_{1}+b_{2})e_i-\sum\limits_{t=i+1}^{n-2}b_{t-i+2}e_t+\mu_{i,n-1}e_{n-1}+\mu_{i,n}e_{n},\ 2\leq i\leq n-2,\\[1mm]
[x,e_{n-1}]=\mu_{n-1,n-1}e_{n-1}+\mu_{n-1,n}e_{n},\\[1mm]
[x,e_n]=\mu_{n,n-1}e_{n-1}+\mu_{n,n}e_{n},\\[1mm]
[x,x]=\delta_{n-1}e_{n-1}+\delta_{n}e_{n}.\\[1mm]
\end{cases}$$

Consider the Leibniz identity for the following triples of elements to deduce
$$\left\{\begin{array}{lll}
{\mathcal L}(x,e_1,e_{n-2})=0 & \Rightarrow & [x,e_{n-1}]=-((n-3)a_{1}+b_{2})e_{n-1}-a_2e_n,\\[1mm]
{\mathcal L}(x,e_2,e_{n-2})=0, & \Rightarrow & [x,e_{n}]=-((n-4)a_{1}+2b_{2})e_{n}, \\[1mm]
{\mathcal L}(x,x,x)=0, & \Rightarrow & ((n-3)a_{1}+b_{2})\delta_{n-1}=0,\ \ a_2\delta_{n-1}+((n-4)a_{1}+2b_{2})\delta_{n}=0. \\[1mm]
\end{array}\right.$$

The equalities ${\mathcal L}(x,e_1,e_i)={\mathcal L}(x,e_2,x)={\mathcal L}(x,x,e_2)={\mathcal L}(x,e_1,x)={\mathcal L}(x,x,e_1)=0$ for $2\leq i\leq n-2$ imply $[e_i,x]=-[x,e_i],\ 1\leq i\leq n$.

Thus, we obtain the following table of multiplications of the algebra $R(g^1_{(n,1)}, 1)$:
$$\begin{cases}
[e_1,x]=\sum\limits_{t=1}^na_{t}e_t, \\[1mm]
[e_2,x]=\sum\limits_{t=2}^{n}b_{t}e_t, \\[1mm]
[e_i,x]=((i-2)a_{1}+b_{2})e_i+\sum\limits_{t=i+1}^{n-1}b_{t-i+2}e_t+(-1)^i\alpha_{n-i+1}e_n,\ 3\leq i\leq n-1,\\[1mm]
[e_n,x]=((n-4)a_{1}+2b_{2})e_n, \\[1mm]
[x,e_1]=-\sum\limits_{t=1}^na_{t}e_t, \\[1mm]
[x,e_2]=-\sum\limits_{t=2}^{n}b_{t}e_t, \\[1mm]
[x,e_i]=-((i-2)a_{1}+b_{2})e_i-\sum\limits_{t=i+1}^{n-1}b_{t-i+2}e_t-(-1)^i a_{n-i+1}e_n,\ 3\leq i\leq n-1,\\[1mm]
[x,e_n]=-((n-4)a_{1}+2b_{2})e_n,\\[1mm]
[x,x]=\delta_{n-1}e_{n-1}+\delta_{n}e_{n}.\\[1mm]
\end{cases}$$
where $b_{2k}=0,\ 2\leq k\leq \frac{n-3}{2}$ and
\begin{equation}\label{restrictions}
((n-3)a_{1}+b_{2})\delta_{n-1}=a_2\delta_{n-1}+((n-4)a_{1}+2b_{2})\delta_{n}=0.
\end{equation}

Let us take the general change of generators basis elements:
$$e_1^\prime=\sum\limits_{i=1}^{n}A_{i}e_i,\ \ \ e_2'=\sum\limits_{t=1}^{n}B_{t}e_t,\ \ \ x^\prime=Hx+\sum\limits_{t=1}^{n}C_{t}e_t.$$

Consider the table of multiplications of the algebra $R(g^1_{(n,1)}, 1)$ in the new basis and then express those products in terms of the old basis to obtain the relations:
$$a_1'=Ha_1,\ \ \ b_{2}'=Hb_2.$$
It is known that $(a_1, b_2)\neq(0,0)$, otherwise the algebra $R(g^1_{(n,1)}, 1)$ is nilpotent algebra.

Now we need to consider the following cases.

{\bf Case 1.} Let $a_1=0$. Then by choosing appropriate value of $H$ one can assume $b_2=1$ and from (\ref{restrictions}) we have $\delta_{n-1}=\delta_{n}=0$.

Taking the change of generator basis elements in the following form:
$$e_1'=e_1-\frac{a_2(b_n-a_{n-1})+a_n}{2}e_n,\ e_2'=e_2-(b_n-a_{n-1})e_n,\ x'=x+b_3e_1-\sum_{i=3}^{n-1}a_ie_{i-1},$$
we can assume $b_3=b_n=a_t=0,\ 3\leq t\leq n$. Thus, the algebra $R_1(g^1_{(n,1)},1)(a_2,b_4,\dots,b_{n-1})$ is obtained.

{\bf Case 2.} Let $a_1\neq0$. Then for suitable value of $H$ one can assume $a_1=1$ and from (\ref{restrictions}) we derive
$$(n-3+b_{2})\delta_{n-1}=0,\ a_2\delta_{n-1}+(n-4+2b_{2})\delta_{n}=0.$$

The change of generator basis elements of the form
$$e_1^\prime=e_1,\ \ e_i^\prime=e_i+\sum\limits_{j=i+1}^{n-1}A_{j-i+2}e_{j}, \ 2\leq i\leq n-1,$$
with $$A_{3}=-b_{3},\ A_{i}=-\frac{1}{i-2}\Big(b_{i}+\sum\limits_{j=3}^{i-1}b_{i-j+2}A_{j}\Big),4\leq i\leq n-1.$$
lead to $b_t=0,\ 3\leq t\leq n-1$.

Setting $x'=x-\sum_{i=3}^{n-2}a_ie_{i-1}-b_ne_{n-2}$, we derive $b_n=a_t=0,\ 3\leq t\leq n-2$.

Thus, the multiplication table of the algebra $R(g^1_{(n,1)},1)$ has the form:
\begin{equation}\label{tablemult}
\begin{cases}
[e_1,x]=e_1+a_{2}e_2+a_{n-1}e_{n-1}+a_ne_n, \\[1mm]
[e_i,x]=(i-2+b_{2})e_i,\ &2\leq i\leq n-2,\\[1mm]
[e_{n-1},x]=(n-3+b_{2})e_{n-1}+a_2e_n,\\[1mm]
[e_n,x]=(n-4+2b_{2})e_n, \\[1mm]
[x,e_1]=-e_1-a_{2}e_2-a_{n-1}e_{n-1}-a_ne_n, \\[1mm]
[x,e_i]=-(i-2+b_{2})e_i,\ &2\leq i\leq n-2,\\[1mm]
[x,e_{n-1}]=-(n-3+b_{2})e_{n-1}-a_2e_n,\\[1mm]
[x,e_n]=-(n-4+2b_{2})e_n,\\[1mm]
[x,x]=\delta_{n-1}e_{n-1}+\delta_{n}e_{n},\\[1mm]
\end{cases}
\end{equation}
with
\begin{equation}\label{restrictions1}
(n-3+b_{2})\delta_{n-1}=0,\ a_2\delta_{n-1}+(n-4+2b_{2})\delta_{n}=0.
\end{equation}

Applying the following change of generator basis elements $e_1$ and $e_{n-1}$ in (\ref{tablemult}):
$$e_1'=e_1+Ae_{2}+Be_{n-1}+Ce_{n},\ e_{n-1}'=e_{n-1}+Ae_{n},$$ we derive the restrictions
\begin{equation}\label{restrictions2}
a_{2}'=a_{2}+A(b_2-1),\ a_{n-1}'=a_{n-1}+B(n-4+b_2),\ a_{n}'=a_{n}+Ba_2+C(n-5+2b_2)-a_{n-1}'A.
\end{equation}

Now we   study the following possible subcases.

{\bf Subcase 2.1.} Let $b_2=4-n$. Then from (\ref{restrictions1}) we get $\delta_{n-1}=\delta_n=0$ and choosing $A=\frac{a_{2}}{n-3},\ C=\frac{a_{n}}{n-3}-\frac{a_{n-1}a_2}{(n-3)^2}$ in (\ref{restrictions2})
we can assume $a_{2}=a_{n}=0$. Hence, the algebra $R_2(g^1_{(n,1)},1)(a_{n-1})$ is obtained.

{\bf Subcase 2.2.} Let $b_2=3-n$. Then setting $A=\frac{a_{2}}{n-2}, \ B=a_{n-1},\ C=\frac{a_{n}+a_{2}a_{n-1}}{n-1}$ in (\ref{restrictions2}) we obtain $a_{2}=a_{n-1}=a_n=0$. The restriction (\ref{restrictions1}) lead to $\delta_n=0$. So, we get the algebra $R_3(g^1_{(n,1)},1)(\delta_{n-1})$.

{\bf Subcase 2.3.} Let $b_2=\frac{4-n}{2}$. Then from (\ref{restrictions1}) we deduce $\delta_{n-1}=0$ and choosing
$A=\frac{2a_{2}}{n-2},\ B=-\frac{2a_{n-1}}{n-4}, \ C=a_{n}-\frac{2a_2a_{n-1}}{n-4}$ in (\ref{restrictions2}) we get $a_2=a_{n-1}=a_{n}=0$. Hence, we obtain the algebra $R_4(g^1_{(n,1)},1)(\delta_{n})$.

{\bf Subcase 2.4.} Let $b_2=\frac{5-n}{2}$. Then restrictions (\ref{restrictions1}) imply $\delta_{n-1}=\delta_n=0$ and putting $A=\frac{2a_{2}}{n-3},\ B=-\frac{2a_{n-1}}{n-3}$ in (\ref{restrictions2}) we have $a_{2}=a_{n-1}=0$. Therefore, we get the algebra $R_5(g^1_{(n,1)},1)(a_{n})$.

{\bf Subcase 2.5.} Let $b_2=1$. Then we have $\delta_{n-1}=\delta_n=0$ and taking $B=-\frac{a_{n-1}}{n-3},\ C=-\frac{a_{n}}{n-3}+\frac{a_{n-1}a_2}{(n-3)^2}$ in (\ref{restrictions2}) one can assume $a_{n-1}=a_{n}=0$. Hence, we derive the algebra $R_6(g^1_{(n,1)},1)(a_{2})$.

{\bf Subcase 2.6.} Finally, let $b_2\neq 4-n,\ 3-n,\ \frac{4-n}{2},\
\frac{5-n}{2},\ 1$. Then from (\ref{restrictions1}) we have $\delta_{n-1}=\delta_n=0$ and choosing
$A=-\frac{a_{2}}{b_2-1},\ B=-\frac{a_{n-1}}{n-4+b_2},\ C=-\frac{a_{n}}{n-5+2b_2}+\frac{a_2a_{n-1}}{(n-5+2b_2)(n-4+b_2)}$ in (\ref{restrictions2}) one can assume $a_2=a_{n-1}=a_n=0$. Thus, we obtain the algebra $R_7(g^1_{(n,1)},1)(b_{2})$.
\end{proof}

In the next theorem we investigate the isomorphisms inside each family of algebras of Theorem \ref{thmmu1}.

\begin{thm} An arbitrary algebra of the family $R(g^1_{(n,1)},1)$ is isomorphic to one of the following pairwise non-isomorphic algebras:

$$R_1(g^1_{(n,1)},1)(a_2,b_4,\dots,b_{n-1}), \ R_2(g^1_{(n,1)},1)(1), \ R_3(g^1_{(n,1)},1)(1),\ R_4(g^1_{(n,1)},1)(1),$$$$ R_5(g^1_{(n,1)},1)(1),\ R_6(g^1_{(n,1)},1)(1),\
R_7(g^1_{(n,1)},1)(b_{2}).$$

Note that the first non-vanishing parameter $\{a_2,b_4,\dots,b_{n-1}\}$
in the algebra  $R_1(g^1_{(n,1)},1)(a_2,b_4,\dots,b_{n-1})$ can be scaled to $1$.

\end{thm}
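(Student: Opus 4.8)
The plan is to take each of the seven families produced in Theorem \ref{thmmu1} and determine the isomorphism classes within each family, using admissible changes of basis that preserve the normal form \eqref{tablemult}. The key observation is that the previous theorem has already exhausted the freedom in the generator basis elements $e_1, e_2, x$ that was needed to reduce the structure constants to normal form; what remains is to examine how the still-available scalings act on the surviving parameters $\delta_{n-1}, \delta_n, a_n, b_2, \ldots$ in each $R_i$. So the first step is to identify, for each family $R_i(g^1_{(n,1)},1)$, the stabilizer of its normal form inside the full change-of-basis group, i.e. which substitutions $e_1' = \sum A_t e_t$, $e_2' = \sum B_t e_t$, $x' = Hx + \sum C_t e_t$ still land in the same family.

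Next I would treat the families carrying a single nonzero scalar parameter, namely $R_2(\delta{=}a_{n-1})$ through $R_6$, each of which has exactly one free constant attached to a nilradical basis vector of a definite weight. For these the argument is essentially a one-dimensional scaling: a substitution $e_i' = \lambda^{?} e_i$ together with a rescaling of $x$ multiplies the lone parameter by a nonzero factor, so any nonzero value can be normalized to $1$, while the value $0$ would force nilpotency or collapse the algebra into another family and is therefore excluded in the non-nilpotent setting. This is why the theorem lists $R_2(1), R_3(1), R_4(1), R_5(1), R_6(1)$: in each case the parameter is either scalable to $1$ or already fixed. I would verify for each that the induced action on the parameter is multiplication by a power of the scaling factor $H$ (and possibly of the eigenvalue ratio coming from $e_1' = \alpha e_1$), hence transitive on $\mathbb{C}\setminus\{0\}$.

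The family $R_7(g^1_{(n,1)},1)(b_2)$ is different: here $b_2$ is a genuine continuous invariant. Under an admissible change of basis the weights $(i-2+b_2)$ attached to the $e_i$ and $(n-4+2b_2)$ attached to $e_n$ are fixed, so $b_2$ cannot be moved, and distinct values of $b_2$ give non-isomorphic algebras; this is why $R_7$ retains its parameter. For $R_1(g^1_{(n,1)},1)(a_2,b_4,\ldots,b_{n-1})$ the situation is the richest: there is a multi-parameter family, and the remark after the statement indicates the only reduction available is to scale the first nonzero parameter to $1$ using the surviving dilation $e_1' = \alpha e_1$, $x' = Hx$ acting diagonally on the weighted components. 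I would make the graded structure explicit — each $b_j$ sits in a fixed weight space relative to the diagonal part of $\mathcal R_{x|_N}$ — and read off that the only orbit relation is the overall rescaling of this weight vector, giving the projectivization described in the remark.

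The main obstacle I expect is the bookkeeping for cross-family collapses: I must rule out that two different normal forms $R_i$ and $R_j$ become isomorphic under some nonstandard change of basis not anticipated in the construction. The cleanest way to handle this is to attach to each family an isomorphism invariant that separates them — the most natural being the spectrum of the operator $\mathcal R_{x|_N}$ (the diagonal eigenvalues $(i-2+b_2)$, etc.) together with the dimension and weight-location of $\Ann_r(R)$ and of the image of $[x,x]$. Since $R_2,\ldots,R_7$ have pairwise distinct eigenvalue patterns (the excluded values of $b_2$ in $R_7$ are precisely the degenerate cases giving rise to $R_2$–$R_6$), and $R_1$ is the unique family with $a_1 = 0$ in \eqref{tablemult} (hence a distinct eigenvalue configuration), these invariants should establish pairwise non-isomorphism. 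Verifying that the six excluded values of $b_2$ in $R_7$ correspond exactly to the special families, and that no further coincidences occur, is the delicate point and the part I would write out most carefully.
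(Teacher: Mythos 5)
Your strategy coincides with the paper's: one writes the general substitution $e_1'=\sum A_te_t$, $e_2'=\sum B_te_t$, $x'=Hx+\sum C_te_t$, computes the induced action on the residual parameter of each family, normalizes any nonzero value to $1$, and observes that the vanishing of the parameter in $R_2,\dots,R_6$ reproduces $R_7$ at exactly the five excluded values $4-n,\ 3-n,\ \frac{4-n}{2},\ \frac{5-n}{2},\ 1$ (the paper's invariant relations are, e.g., $a_{n-1}'=\frac{a_{n-1}}{A_1^{n-4}B_2}$ for $R_2$, $\delta_{n-1}'=\frac{\delta_{n-1}}{A_1^{n-3}B_2}$ for $R_3$, and $b_2'=b_2$ for $R_7$). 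You are also right that a vanishing parameter collapses the algebra into $R_7$ rather than forcing nilpotency.

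The genuine gap is in your separation step. The invariants you propose --- the spectrum of $\mathcal{R}_{x|_N}$, $\Ann_r(R)$, and the image of $[x,x]$ --- fail to separate three of the pairs that must be distinguished: $R_2(1)$ vs.\ $R_7(4-n)$, $R_5(1)$ vs.\ $R_7(\frac{5-n}{2})$, and $R_6(1)$ vs.\ $R_7(1)$. In each pair both algebras are Lie (so $[x,x]=0$), both have trivial center and right annihilator, and $\mathcal{R}_{x|_N}$ has \emph{identical} spectrum; the only difference is the nilpotent off-diagonal term ($a_{n-1}e_{n-1}$, $a_ne_n$, $a_2e_2$, respectively) that makes $\mathcal{R}_x$ non-semisimple. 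Hence your parenthetical claim that ``$R_2,\ldots,R_7$ have pairwise distinct eigenvalue patterns'' is false once the degenerate values of $b_2$ re-enter the final list --- and they must, since the $a_{n-1}=0$ member of the $R_2$ family \emph{is} $R_7(4-n)$ (compare the explicit ``$b_2\in\mathbb{C}$'' in the parallel theorem for $g^2_{(n,1)}$). To close this you would need either the Jordan type of $\mathcal{R}_{x'}$ as an invariant for \emph{every} admissible $x'=Hx+\sum C_te_t$ --- not automatic, because $\mathcal{R}_{x'}-H\mathcal{R}_x$ is nilpotent but need not commute with $\mathcal{R}_x$ --- or, as the paper does, read non-isomorphism directly off the invariant relations your own first step would produce: $a_{n-1}'=\frac{a_{n-1}}{A_1^{n-4}B_2}$, $a_n'=\frac{a_n}{A_1^{n-5}B_2^2}$, $a_2'=\frac{a_2A_1}{B_2}$ never carry a nonzero value to zero. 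A smaller inaccuracy: for $R_1$ the action on $(a_2,b_4,\dots,b_{n-1})$ is not an overall rescaling/projectivization but the two-torus action $a_2\mapsto\frac{A_1}{B_2}a_2$, $b_t\mapsto A_1^{2-t}b_t$ with unequal weights; the remark's normalization of the first nonvanishing parameter still follows, but your description of the orbit relation is wrong as stated.
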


\begin{proof} We start with the general change of generator basis elements of the algebra $R(g^1_{(n,1)},1)$:
\[e_1^\prime=\sum\limits_{t=1}^{n}A_{t}e_t, \quad e_2^\prime=\sum\limits_{t=1}^{n}B_{t}e_t,\quad x^\prime=Hx+\sum\limits_{t=1}^{n}C_{t}e_t.\]

Write the multiplications table of an algebra from the considered family in the new basis $\{e_1', \dots, e_n',  x'\}$ and express them via basis $\{e_1, \dots, e_n,  x\}$ to get invariant relations on the structure constants of the algebra.

\begin{itemize}
\item For the algebra $R_1(g^1_{(n,1)},1)(a_2,b_4,\dots,b_{n-1})$, we have the following invariant relations:

$$a_{2}'=\frac{A_1a_2}{B_2}, \ b_{t}'=\frac{b_t}{A_1^{t-2}},\ \ 4\leq t\leq n-1, \ \text{where} \ \  b_{2k}=0,\ 2\leq k\leq \frac{n-3}{2}.$$

Note that the first non-vanishing parameter $\{a_2,b_4,\dots,b_{n-1}\}$
in the algebra  can be scaled to $1$.

\item  Consider the family of algebras $R_2(g^1_{(n,1)},1)(a_{n-1})$. Then we have the following invariant relation:

$$a_{n-1}'=\frac{a_{n-1}}{A_1^{n-4}B_2}.$$

If $a_{n-1}\neq0$, then by putting $B_2=\frac{a_{n-1}}{A_1^{n-4}}$, we get $a_{n-1}'=1$. Hence, the algebra $R_2(g^1_{(n,1)},1)(1)$ is obtained;

If $a_{n-1}=0$, then we obtain $R_7(g^1_{(n,1)},1)(4-n)$.

\item Consider the family of algebras $R_3(g^1_{(n,1)},1)(\delta_{n-1})$.  Then the invariant relation is the following
\[\delta_{n-1}'=\frac{\delta_{n-1}}{A_{1}^{n-3}B_2}.\]

If $\delta_{n-1}\neq0$, then taking $B_2=\frac{\delta_{n-1}}{A_{1}^{n-3}}$, we can assume $\delta_{n-1}'=1$ and we obtain $R_3(g^1_{(n,1)},1)(1)$;

If $\delta_{n-1}=0$, then we derive $R_7(g^1_{(n,1)},1)(3-n)$.

\item Let consider the family $R_4(g^1_{(n,1)},1)(\delta_{n})$.  Then
\[\delta_{n}'=\frac{\delta_{n}}{A_{1}^{n-4}B_2^2}.\]

If $\delta_{n}\neq0$, then taking $B_2=\sqrt{\frac{\delta_{n}}{A_{1}^{n-4}}}$, we can assume $\delta_{n}'=1$ and we obtain $R_4(g^1_{(n,1)},1)(1)$;

If $\delta_{n}=0$, we derive $R_7(g^1_{(n,1)},1)(\frac{4-n}{2})$.

\item  For the family of algebras $R_5(g^1_{(n,1)},1)(a_{n})$ we have the following invariant relation:
$$a_{n}'=\frac{a_{n}}{A_1^{n-5}B_2^2}.$$

If $a_{n}\neq0$, then by putting $B_2=\sqrt{\frac{a_{n}}{A_1^{n-5}}}$, we get $a_{n}'=1$ and $R_5(g^1_{(n,1)},1)(1)$;

If $a_{n}=0$, then we obtain $R_7(g^1_{(n,1)},1)(\frac{5-n}{2})$.

\item Consider the family $R_6(g^1_{(n,1)},1)(a_2)$. Then we obtain
\[a_{2}'=\frac{a_{2}A_1}{B_{2}}.\]

If $a_{2}\neq0$, then putting $B_2=A_1a_2$ we derive $a_2'=1$ and hence, we obtain the algebra $R_6(g^1_{(n,1)},1)(1)$;

If $a_{2}=0$, then we derive $R_7(g^1_{(n,1)},1)(1)$.

\item Considering the family of algebras $R_7(g^1_{(n,1)},1)(b_2)$ we have $b_{2}'=b_2$, which show that all algebras of this family are pairwise non-isomorphic.
\end{itemize}
\end{proof}

The next results are established in a similar way as for $R(g^2_{(n,1)},1)$.

\begin{thm} An arbitrary algebra of the family $R(g^2_{(n,1)},1) $ admits a basis such that its multiplications table has one of the following types:
$$R_1(g^2_{(n,1)},1)(b_4,\dots,b_{n-1}): \ [e_i,x]=-[x,e_i]=e_i+\sum\limits_{t=i+2}^{n-1}b_{t-i+2}e_t,\ 2\leq i\leq n-1,$$
$$R_2(g^2_{(n,1)},1)(a_{2}):\begin{cases}
[e_1,x]=-[x,e_1]=e_1+a_{2}e_2,\\[1mm]
[e_i,x]=-[x,e_i]=(i-1)e_i,\ 2\leq i\leq n-1,\\[1mm]
[e_n,x]=-[x,e_n]=-a_{2}e_3+2e_{n},\\[1mm]
\end{cases}$$

$$R_3(g^2_{(n,1)},1)(\gamma_{n-1}):\begin{cases}
[e_1,x]=-[x,e_1]=e_1,\\[1mm]
[e_i,x]=-[x,e_i]=(i+3-n)e_i,\ 2\leq i\leq n-1,\\[1mm]
[e_n,x]=-[x,e_n]=\gamma_{n-1}e_{n-1}+2e_{n},\\[1mm]
\end{cases}$$

$$R_4(g^2_{(n,1)},1)(\delta_{n-1}):\begin{cases}
[e_1,x]=-[x,e_1]=e_1,\\[1mm]
[e_i,x]=-[x,e_i]=(i+1-n)e_i,\ 2\leq i\leq n-1,\\[1mm]
[e_n,x]=-[x,e_n]=2e_{n},\ \ [x,x]=\delta_{n-1}e_{n-1}.\\[1mm]
\end{cases}$$

$$R_5(g^2_{(n,1)},1)(b_{2}):\begin{cases}
[e_1,x]=-[x,e_1]=e_1,\\[1mm]
[e_i,x]=-[x,e_i]=(i-2+b_{2})e_i,\ 2\leq i\leq n-1,\\[1mm]
[e_n,x]=-[x,e_n]=2e_{n},\ \ b_2\neq 1,\ 5-n,\ 3-n.\\[1mm]
\end{cases}$$
\end{thm}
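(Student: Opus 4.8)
The plan is to run, step by step, the scheme of the proof of Theorem~\ref{thmmu1}, now feeding in the derivations of $g^2_{(n,1)}$ from Proposition~\ref{prop1}. Fix the basis $\{e_1,\dots,e_n\}$ of $g^2_{(n,1)}$ of Theorem~\ref{thm26}. For $x\in Q$ the operator $\mathcal{R}_{x|_{g^2_{(n,1)}}}$ is a non-nilpotent derivation, so the products $[e_i,x]$ are exactly the derivation formula for $g^2_{(n,1)}$, with free scalars $a_1,\dots,a_{n-1}$, $b_2,\dots,b_{n-1}$, $c_{n-1}$; in particular $[e_1,x]=\sum_{t=1}^{n-1}a_te_t$, and the generator $e_n$ acts nontrivially through $[e_n,x]=-\sum_{t=3}^{n-2}a_{t-1}e_t+c_{n-1}e_{n-1}+2a_1e_n$. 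The products $[x,e_i]$ and $[x,x]$ are then written with undetermined coefficients, using only that $[e_i,x]+[x,e_i]$ and $[x,x]$ lie in $\Ann_r(R(g^2_{(n,1)},1))$ together with $\{e_1,\dots,e_{n-2}\}\cap\Ann_r(R(g^2_{(n,1)},1))=\emptyset$. Since $[e_i,e_n]=e_{i+2}\neq0$, the vector $e_n$ never lies in the right annihilator, so $[x,x]$ has no $e_n$-component; this already explains the absence of any $\delta_n$ from the final list, in contrast to the $g^1_{(n,1)}$ case.

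Next I would impose $\mathcal{L}(x,y,z)=0$ on a short list of triples involving $x$ and the generators $e_1,e_2,e_n$, paralleling the triples $(x,e_1,e_{n-2})$, $(x,e_2,e_{n-2})$, $(x,x,x)$, $(x,e_1,e_i)$, $(x,e_2,x)$, $(x,x,e_2)$ of the $g^1_{(n,1)}$ proof and adding triples of the form $(x,e_i,e_n)$ that probe the new $e_n$-action. These identities determine $[x,e_{n-1}]$ and $[x,e_n]$, force the anti-symmetry $[e_i,x]=-[x,e_i]$ for every $i$, and return the compatibility relation between $\delta_{n-1}$ and $a_1,b_2$: after the normalizations below it reads $(n-3+b_2)\delta_{n-1}=0$, which is precisely the requirement that $\delta_{n-1}e_{n-1}\in\Ann_r(R(g^2_{(n,1)},1))$ and is the analogue of~\eqref{restrictions}. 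At this point the table depends only on $a_1,a_2,c_{n-1},\delta_{n-1}$ and $b_2,\dots,b_{n-1}$, subject to that single scalar restriction.

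Then I would apply the general change of generator basis elements $e_1'=\sum A_te_t$, $e_2'=\sum B_te_t$, $x'=Hx+\sum C_te_t$; as before $a_1'=Ha_1$ and $b_2'=Hb_2$, and $(a_1,b_2)\neq(0,0)$ (otherwise $R$ is nilpotent), so the argument splits. If $a_1=0$, scaling gives $b_2=1$, the restriction kills $\delta_{n-1}$, and a triangular substitution removes all surviving off-diagonal parameters except $b_4,\dots,b_{n-1}$, yielding $R_1(g^2_{(n,1)},1)(b_4,\dots,b_{n-1})$. If $a_1\neq0$, scaling gives $a_1=1$; a triangular substitution $e_i'=e_i+\sum_{j>i}A_{j-i+2}e_j$ clears $b_3,\dots,b_{n-1}$ and absorbing the $e_3,\dots,e_{n-2}$-block of $[e_n,x]$ into $x'$ clears $a_3,\dots,a_{n-2}$. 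Now $\mathcal{R}_x$ is diagonal with eigenvalues $1$ on $e_1$, $i-2+b_2$ on $e_i$ $(2\le i\le n-1)$, and $2$ on $e_n$, so the three parameters that can still resist removal are $a_2$, $c_{n-1}$ and $\delta_{n-1}$, with normalizing denominators $b_2-1$, $n-5+b_2$ and $n-3+b_2$; the first two are the eigenvalue differences of $e_2,e_1$ and of $e_{n-1},e_n$, while the third is the eigenvalue of $e_{n-1}$ itself, which must vanish for $e_{n-1}$ to enter the right annihilator and thus for $\delta_{n-1}$ to survive. A denominator vanishes exactly at the \emph{resonances} $b_2=1$, $b_2=5-n$, $b_2=3-n$, where respectively $a_2$, $c_{n-1}$, $\delta_{n-1}$ cannot be removed, producing $R_2(g^2_{(n,1)},1)(a_2)$, $R_3(g^2_{(n,1)},1)(\gamma_{n-1})$ with $\gamma_{n-1}=c_{n-1}$, and $R_4(g^2_{(n,1)},1)(\delta_{n-1})$; for every other $b_2$ all three are removed and one reaches $R_5(g^2_{(n,1)},1)(b_2)$.

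I expect the main obstacle to be the bookkeeping of the nontrivial $e_n$-row, which has no counterpart in the $g^1_{(n,1)}$ computation. Because the block $-\sum_{t=3}^{n-2}a_{t-1}e_t$ sits inside $[e_n,x]$, the single scalar $a_2$ multiplying $e_2$ in $[e_1,x]$ also multiplies $e_3$ in $[e_n,x]$; at the resonance $b_2=1$ the substitution that would kill $a_2$ is unavailable, so both occurrences survive together and yield precisely the coupled shape $[e_1,x]=e_1+a_2e_2$, $[e_n,x]=-a_2e_3+2e_n$ of $R_2(g^2_{(n,1)},1)(a_2)$. Verifying that this coupling is forced by anti-symmetry and the Leibniz identity, so that it contributes one invariant rather than two, is the delicate point; once the three eigenvalue-difference denominators are correctly located, the split into $R_1$--$R_5$ goes through exactly as in Theorem~\ref{thmmu1}.
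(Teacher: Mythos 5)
Your proposal is correct and follows essentially the same route as the paper, which gives no separate argument for $R(g^2_{(n,1)},1)$ but establishes it ``in a similar way'' to the detailed proof for $R(g^1_{(n,1)},1)$: derivations from Proposition~\ref{prop1}, right-annihilator constraints (correctly yielding $[x,x]=\delta_{n-1}e_{n-1}$ with no $e_n$-component since $[e_2,e_n]=e_4\neq 0$), Leibniz identities forcing $[e_i,x]=-[x,e_i]$ and $((n-3)a_1+b_2)\delta_{n-1}=0$, the split $a_1=0$ versus $a_1\neq 0$, and removal of parameters up to the resonances $b_2=1,\,5-n,\,3-n$. You also correctly identify the genuinely new features of the $g^2$ case — the coupled $a_2$ occurrences in $[e_1,x]$ and $[e_n,x]$ coming from the derivation formula for $d(e_n)$, and the fact that in the $a_1=0$ case the eigenvalue gap between $e_1$ (eigenvalue $0$) and $e_2$ (eigenvalue $1$) lets \emph{all} $a_t$, including $a_2$, be removed (at the cost of a compensating change $e_n'=e_n+a_2e_3+\cdots$ to preserve the nilradical table), which is exactly why $R_1(g^2_{(n,1)},1)$ carries no $a_2$ parameter, unlike its $g^1$ analogue.
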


\begin{thm}
An arbitrary algebra of the family $R(g^2_{(n,1)},1)$ is isomorphic to one of the following pairwise non isomorphic algebras:

$$R_1(g^2_{(n,1)},1)(b_4,\dots,b_{n-1}), \ R_3(g^2_{(n,1)},1)(1), \ R_4(g^2_{(n,1)},1)(1),\ R_5(g^2_{(n,1)},1)(b_{2}), \ b_2\in \mathbb{C}.$$

Note that the first non-vanishing parameter $\{b_4,\dots,b_{n-1}\}$
in the algebra  $R_1(g^2_{(n,1)},1)(b_4,\dots,b_{n-1})$ can be scaled to $1$.

\end{thm}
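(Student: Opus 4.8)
The plan is to follow verbatim the strategy already used for $R(g^1_{(n,1)},1)$. I would begin from the general change of generator basis elements
\[e_1^\prime=\sum\limits_{t=1}^{n}A_{t}e_t,\quad e_2^\prime=\sum\limits_{t=1}^{n}B_{t}e_t,\quad x^\prime=Hx+\sum\limits_{t=1}^{n}C_{t}e_t,\]
with $A_1B_2\neq 0$, write the multiplication table of each representative of the previous theorem in the new basis, and then express the products back in the old basis to read off invariant relations on the structure constants. The first invariant to track is the diagonal pattern of the operator $\mathcal{R}_{x|_{N}}$, since it pins down the scalar $b_2$ and thus separates the families according to the eigenvalue of the non-nilpotent derivation.

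For the generic family $R_1(g^2_{(n,1)},1)(b_4,\dots,b_{n-1})$ I expect the relations $b_t'=b_t/A_1^{t-2}$, $4\le t\le n-1$, exactly as in the $g^1$ computation, so that the first non-vanishing parameter scales to $1$ and distinct normalized tuples give non-isomorphic algebras. For the two exceptional families $R_3(g^2_{(n,1)},1)(\gamma_{n-1})$ and $R_4(g^2_{(n,1)},1)(\delta_{n-1})$ I would derive a single invariant relation expressing the distinguished parameter as itself divided by a nonzero monomial in $A_1,B_2,H$; when it is nonzero it normalizes to $1$ (giving $R_3(1)$ and $R_4(1)$), and when it vanishes the table collapses onto $R_5$ at the forbidden eigenvalues $b_2=5-n$ and $b_2=3-n$ respectively. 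Since $R_5(g^2_{(n,1)},1)(b_2)$ satisfies $b_2'=b_2$, its members are pairwise non-isomorphic, and together with the two degenerate values just recovered this is precisely what lets the earlier restriction be dropped so that $b_2$ ranges over all of $\mathbb{C}$.

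The crux of the proof, and the step I expect to be the main obstacle, is to dispose of the family $R_2(g^2_{(n,1)},1)(a_2)$, whose eigenvalue is the forbidden value $b_2=1$. The difficulty is that the naive change $e_1'=e_1+Ae_2$ leaves $a_2$ untouched, because the coefficient governing its motion is $b_2-1=0$ — the very degeneracy that kept $R_6$ alive in the $g^1$ classification. The only remaining freedom is the translational part $\sum C_te_t$ of $x'$, which modifies the derivation by $\mathcal{R}_{x'|_{N}}=H\mathcal{R}_{x|_{N}}+\sum_tC_t\mathcal{R}_{e_t|_{N}}$, that is, by nilpotent operators. My plan is to decide whether these nilpotent corrections can absorb the two grade-preserving terms $a_2e_2$ in $[e_1,x]$ and $-a_2e_3$ in $[e_n,x]$, which is what would force $R_2(a_2)\cong R_5(1)$ and thereby remove $R_2$ from the final list.

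The delicate issue is structural: the pairs $\{e_1,e_2\}$ and $\{e_3,e_n\}$ carry the repeated eigenvalues $1$ and $2$ of $\mathcal{R}_{x|_{N}}$, so the $a_2$-terms record the Jordan behaviour of this operator at its repeated eigenvalues, whereas every $\mathcal{R}_{e_t|_{N}}$ strictly raises the eigenvalue grading. I would therefore compare the Jordan type of $\mathcal{R}_{x|_{N}}$ for $R_2(a_2)$ with the one achievable inside $R_5(1)$ after such a translation, and this comparison is exactly what settles whether $a_2$ is genuinely removable. Once this point is resolved, the pairwise non-isomorphism of the surviving algebras $R_3(1)$, $R_4(1)$ and $R_5(b_2)$ follows from the non-removable terms $\gamma_{n-1}e_{n-1}$ in $[e_n,x]$, $\delta_{n-1}e_{n-1}$ in $[x,x]$, and the invariant $b_2$, none of which can be created inside the $R_5$ family.
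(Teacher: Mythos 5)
Your proposal matches the route the paper implicitly takes (the paper gives no written proof of this theorem, only the remark that it is ``established in a similar way''), and your treatment of $R_1$, $R_3$, $R_4$, $R_5$ is sound: the invariant relations $b_t'=b_t/A_1^{t-2}$, the normalizations $\gamma_{n-1}'=1$, $\delta_{n-1}'=1$, the collapses $R_3(0)\cong R_5(5-n)$, $R_4(0)\cong R_5(3-n)$, $R_2(0)\cong R_5(1)$, and the invariance $b_2'=b_2$ are all correct. But at the step you yourself flag as the crux --- eliminating $R_2(g^2_{(n,1)},1)(a_2)$ for $a_2\neq 0$ --- you do not actually prove anything: you only announce that a comparison of Jordan types ``settles whether $a_2$ is genuinely removable.'' That comparison is the entire content of the theorem beyond the routine cases, so as written the proof has a genuine gap exactly where the statement differs from the $g^1$ pattern (where the analogous family $R_6(g^1_{(n,1)},1)(1)$ survives).

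Moreover, if you carry out the test you propose, it closes in the wrong direction. In $R_2(a_2)$ any element of the complement has the form $x'=Hx+\sum_t C_t e_t$ with $H\neq0$, and $\mathcal{R}_{x'|_N}=H\mathcal{R}_{x|_N}+\sum_t C_t\mathcal{R}_{e_t|_N}$. Assigning $e_1,e_2$ degree $1$, $e_3,e_n$ degree $2$, and $e_i$ degree $i-1$ otherwise, every $\mathcal{R}_{e_t|_N}$ strictly raises this degree (e.g.\ $\mathcal{R}_{e_t}(e_1)=e_{t+1}$ with $t\geq2$, and $\mathcal{R}_{e_t}(e_n)=-e_{t+2}$), while the eigenvalue of $\mathcal{R}_{x'|_N}$ on a degree-$d$ vector is $Hd$. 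Hence the within-eigenvalue entries $Ha_2$ (from $e_1\mapsto e_1+a_2e_2$) and $-Ha_2$ (from $e_n\mapsto -a_2e_3+2e_n$) cannot be altered by any choice of $C_t$, and since distinct degree blocks carry distinct eigenvalues, the off-diagonal blocks conjugate away: for $a_2\neq0$ every $\mathcal{R}_{x'|_N}$ has Jordan blocks of size $2$ at the eigenvalues $H$ and $2H$. In $R_5(g^2_{(n,1)},1)(1)$ the same block-triangular analysis shows every $\mathcal{R}_{x'|_N}$ is diagonalizable (the within-eigenvalue couplings all vanish), so $R_2(a_2)\not\cong R_5(1)$ for $a_2\neq0$; instead the relation $a_2'=A_1a_2/B_2$ normalizes $a_2$ to $1$, leaving $R_2(g^2_{(n,1)},1)(1)$ as a class of its own, in exact parallel with $R_6(g^1_{(n,1)},1)(1)$ in the preceding theorem. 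So your plan, honestly completed, contradicts the stated list rather than proving it: either $R_2(1)$ must be added to the enumeration, or an isomorphism absorbing it must be produced by some mechanism other than the translations of $x$ you consider --- and the Jordan-type invariant shows no such mechanism exists within basis changes preserving the nilradical, which every isomorphism of these solvable algebras does.
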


Analogously we obtain the descriptions of solvable algebras with nilradicals $g^3_{(n,1)}, g^1_{7}, g^2_{9}, g^3_{11}$ under the condition that complemented space is one-dimensional.

\begin{thm}\label{thmmu3} An arbitrary solvable Leibniz algebra whose nilradical is one of the following: $g ^ 3_ {(n, 1)}, g ^ 1_ {7}, g ^ 2_ {9}, g ^ 3_ {11}$ and with one-dimensional complemented space is isomorphic to the following corresponding algebras:
$$R(g^3_{(n,1)},1):\left\{\begin{array}{ll}
[e_1,x]=-[x,e_1]=e_1,&\\[1mm]
[e_i,x]=-[x,e_i]=(i+1)e_i,&2\leq i \leq {n-1},\\[1mm]
[e_n,x]=-[x,e_n]=2e_n.\\[1mm]
\end{array}\right.$$
$$R(g^1_{7},1):\ [e_i,x]=-[x,e_i]=ie_i,\ \ 1\leq i\leq 7.$$
$$R(g^2_{9},1):\ [e_i,x]=-[x,e_i]=ie_i, \ \ 1\leq i\leq 9.$$
$$R(g^3_{11},1):\ [e_i,x]=-[x,e_i]=ie_i,\ \ 1\leq i\leq 11.$$
\end{thm}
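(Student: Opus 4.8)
The plan is to repeat, for each of the four nilradicals $g^3_{(n,1)}$, $g^1_{7}$, $g^2_{9}$, $g^3_{11}$, the argument already carried out in detail for $g^1_{(n,1)}$ in the proof of Theorem~\ref{thmmu1}. First I would fix the standard basis $\{e_1,\dots,e_n\}$ of the nilradical from Theorem~\ref{thm26} and recall that, for $x\in Q$, the operator $\mathcal{R}_{{x|}_{N}}$ is a non-nilpotent derivation of $N$. Consequently its matrix is the one listed in Proposition~\ref{prop1}, which immediately yields all the right products $[e_i,x]$ in terms of the derivation parameters $a_t,b_t,\dots$. For the left products I would write $[x,e_i]$ with undetermined coefficients, using that $[e_i,x]+[x,e_i]\in\Ann_r(R)$ and $[x,x]\in\Ann_r(R)$, which confines these terms to the top graded components of $N$.

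Next I would impose the Leibniz identity $\mathcal{L}(x,y,z)=0$ on a short list of triples involving $x$, exactly as was done to derive \eqref{restrictions}. The vanishing of $\mathcal{L}(x,e_1,e_i)$, $\mathcal{L}(x,e_2,x)$, $\mathcal{L}(x,x,e_2)$, $\mathcal{L}(x,e_1,x)$ and $\mathcal{L}(x,x,e_1)$ should force the antisymmetry $[e_i,x]=-[x,e_i]$ for all $i$, while $\mathcal{L}(x,x,x)=0$ together with the identities applied to the ``top'' brackets should produce the analogue of the restriction relations, coupling the components of $[x,x]$ to the diagonal eigenvalues of $\mathcal{R}_{{x|}_{N}}$.

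The decisive structural point, and the reason these four cases collapse to a single algebra each rather than to a family, is the shape of the diagonal part of the derivations in Proposition~\ref{prop1}. For $g^1_{(n,1)}$ and $g^2_{(n,1)}$ the eigenvalue of the second generator $e_2$ is an independent parameter $b_2$, which survives normalization and indexes a family; but for $g^3_{(n,1)}$, $g^1_{7}$, $g^2_{9}$ and $g^3_{11}$ the richer bracket relations tie the eigenvalue of $e_2$ to that of $e_1$ (it equals $3a_1$ for $g^3_{(n,1)}$ and $2a_1$ in the three exceptional dimensions), so the whole diagonal is governed by the single parameter $a_1$. Non-nilpotency of $\mathcal{R}_{{x|}_{N}}$ forces $a_1\neq0$, whence the scaling $x'=Hx$ normalizes $a_1=1$; then every diagonal eigenvalue becomes a fixed nonzero integer, and these are pairwise distinct (they exhaust $\{1,2,\dots,n\}$ for $g^3_{(n,1)}$ and equal $1,\dots,\dim N$ in the other three cases). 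The restriction relations then read $\lambda\,\delta=0$ with $\lambda\neq0$, so all the coefficients of $[x,x]$ vanish, i.e.\ $[x,x]=0$.

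Finally I would eliminate the remaining off-diagonal derivation parameters $a_t,b_t,\dots$ by the same type of generator change $e_i'=e_i+\sum_{j>i}A_{j}e_j$ and $x'=x-\sum_{t}C_t e_t$ used in Theorem~\ref{thmmu1}; since the target eigenvalues are distinct, each parameter can be cleared in turn, leaving precisely the diagonal multiplication tables stated. The only genuine obstacle is the heavier bookkeeping for $g^2_{9}$ and $g^3_{11}$, whose tables carry many structure constants, but I expect no new phenomenon beyond what is already handled for $g^1_{(n,1)}$.
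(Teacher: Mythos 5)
Your proposal is correct and follows exactly the route the paper intends: the paper gives no separate proof of Theorem~\ref{thmmu3}, stating only that it is obtained ``analogously'' to Theorem~\ref{thmmu1}, and your plan spells out precisely that analogy. In particular, you correctly identify the decisive point that makes these four cases rigid rather than parametric --- in Proposition~\ref{prop1} the diagonal of every derivation of $g^3_{(n,1)}$, $g^1_7$, $g^2_9$, $g^3_{11}$ is governed by the single parameter $a_1$ (so non-nilpotency forces $a_1\neq 0$, normalized to $1$), the resulting eigenvalues are distinct nonzero integers, hence $[x,x]=0$ and all off-diagonal parameters can be cleared --- which is also consistent with the paper's remark before Theorem~\ref{thmmu2} that only $g^i_{(n,1)}$, $i=1,2$, admit a two-dimensional complement.
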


\

\subsection{Descriptions of algebras $R(g^{i}_{(n,1)}, 2), \ i=1, 2$}

\

In this subsection we classify solvable  Leibniz algebras whose nilradical is a quasi-filiform Lie algebra of maximal length and dimension of the complemented space $Q$ is equal to two.

Due to Proposition \ref{prop1} we conclude that the dimensions of the complementary spaces only for nilradicals $g^{i}_{(n,1)},  \ i=1, 2$ can be equal to two.

\begin{thm} \label{thmmu2}
An arbitrary Leibniz algebra of the family $R(g^{i}_{(n,1)}, 2), \ i=1, 2$ is isomorphic to one of the following Lie algebras:

$$R(g^{1}_{(n,1)}, 2):\left\{\begin{array}{ll}
[e_1,x]=-[x,e_1]=e_1,&\\[1mm]
[e_i,x]=-[x,e_i]=(i-2)e_i,&2\leq i\leq n-1,\\[1mm]
[e_n,x]=-[x,e_n]=(n-4)e_n,&\\[1mm]
[e_i,y]=-[y,e_i]=e_i,& 2\leq i\leq n-1,\\[1mm]
[e_n,y]=-[y,e_n]=2e_n,\\[1mm]
\end{array}\right.$$

$$R(g^{2}_{(n,1)}, 2):\left\{\begin{array}{ll}
[e_1,x]=-[x,e_1]=e_1,&\\[1mm]
[e_i,x]=-[x,e_i]=(i-2)e_i,&2\leq i \leq {n-1},\\[1mm]
[e_n,x]=-[x,e_n]=2e_n,&\\[1mm]
[e_i,y]=-[y,e_i]=e_i,& 2\leq i \leq {n-1}.\\[1mm]
\end{array}\right.$$

\end{thm}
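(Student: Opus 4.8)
The plan is to classify the two-dimensional complemented case by leveraging the already-completed one-dimensional analysis and the structure theory of nil-independent derivations. The starting point is the theory from the Preliminaries: for a solvable Leibniz algebra $R = N \oplus Q$ with $\dim Q = 2$, each $x \in Q$ gives a non-nilpotent outer derivation $\mathcal{R}_{x|_N}$, and for a basis $\{x, y\}$ of $Q$ the operators $\{\mathcal{R}_{x|_N}, \mathcal{R}_{y|_N}\}$ must be nil-independent. Since the two-dimensional complement can only occur for $g^1_{(n,1)}$ and $g^2_{(n,1)}$ (as noted just before the statement, because the derivation algebras of $g^3_{(n,1)}, g^1_7, g^2_9, g^3_{11}$ contain at most one non-nilpotent independent diagonal direction), I only need to handle these two nilradicals.

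First I would invoke Proposition \ref{prop1} to write down the general form of two commuting(-enough) derivations $\mathcal{R}_{x|_N}$ and $\mathcal{R}_{y|_N}$ in terms of the free parameters $(a_1, b_2, a_2, \dots)$ and the corresponding primed parameters for $y$. The key structural input is that the derivation space of $g^1_{(n,1)}$ (and likewise $g^2_{(n,1)}$) has a two-dimensional space of diagonalizable outer derivations modulo nilpotent ones, spanned essentially by the parameters $a_1$ and $b_2$; nil-independence forces the $2\times 2$ matrix of leading coefficients $(a_1, b_2)$ versus $(a_1', b_2')$ to be nonsingular. After choosing $x$ to realize one independent direction and $y$ the other, I can use linear combinations of $x$ and $y$ (a change of basis in $Q$) to normalize, for instance, $x$ to carry the ``$a_1$'' action and $y$ the ``$b_2$'' action, producing the two diagonal gradings displayed in the statement.

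Next I would eliminate the remaining off-diagonal parameters and the $[x,x]$, $[y,y]$, $[x,y]$, $[y,x]$ terms. These lie in $\Ann_r(R)$, and exploiting the Leibniz identity $\mathcal{L}(-,-,-)=0$ on triples involving $x$, $y$, and the generators $e_1, e_2$ — exactly as in the proof of Theorem \ref{thmmu1} — yields that $[e_i, x] = -[x, e_i]$ and $[e_i, y] = -[y, e_i]$, and forces the bracket among $Q$-elements to vanish. The residual normalization (killing $a_2$, $a_{n-1}$, $a_n$, and the $b_t$) proceeds by the same change-of-generators technique used in the one-dimensional case: shifts $e_1' = e_1 + \sum A_t e_t$ absorb the off-diagonal coefficients, now with \emph{two} scaling operators available, which is precisely what lets us drive all free parameters to their canonical $0/1/2$ values and obtain the rigid, parameter-free algebras in the statement.

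The main obstacle I anticipate is verifying \textbf{nil-independence in a sharp enough form} to guarantee that the $2\times 2$ leading-coefficient matrix is invertible rather than merely non-nilpotent as a family; one must rule out the degenerate configurations where the two operators share an eigenstructure that collapses the available normalizations. A secondary subtlety is confirming that the resulting algebras are genuinely \emph{Lie} (not merely Leibniz): since the antisymmetry $[e_i, x] = -[x, e_i]$ emerges from the Leibniz identity and the $Q$-brackets vanish, one should check the Jacobi identity closes, i.e. that no nonzero elements remain in the right annihilator coming from symmetric brackets. Once these two points are secured, the enumeration reduces to the finite case analysis on $b_2$ already carried out for the one-dimensional families, and the two canonical tables follow.
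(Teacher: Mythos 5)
Your proposal is correct and follows essentially the same route as the paper: use Proposition \ref{prop1} plus nil-independence to normalize the two derivations to the diagonal directions $(a_1,b_2)=(1,0)$ and $(0,1)$ (the paper implements this by observing that $\spann\{e_1,\dots,e_n,x\}$ is a subalgebra isomorphic to $R_7(g^1_{(n,1)},1)(0)$ from Theorem \ref{thmmu1}), then derive antisymmetry and the vanishing of the $Q$-brackets from right-annihilator and Leibniz-identity arguments, and absorb the residual parameters by changes of generators. Your anticipated ``main obstacle'' in fact dissolves immediately: by Proposition \ref{prop1} a derivation of $g^i_{(n,1)}$, $i=1,2$, is nilpotent precisely when $(a_1,b_2)=(0,0)$, so nil-independence of $\{\mathcal{R}_x,\mathcal{R}_y\}$ is literally the invertibility of your $2\times 2$ leading-coefficient matrix.
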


\begin{proof}
Consider the description of derivations for the algebra $g^{1}_{(n,1)}$ given in Proposition \ref{prop1}.
Since two parameters $a_1,b_2$ appear on the diagonal, it is clear that we can obtain two nil-independent
derivations, such as those corresponding to $(a_1,b_2)=(1,0)$, and $(0,1)$, denote them by $\mathcal{R}_{x}$ and $\mathcal{R}_{y}$, respectively.

We consider $R(g^{1}_{(n,1)},2)=\langle e_1,\dots,e_{n},x,y \rangle$, where $\mathcal{R}_{x},\mathcal{R}_{y}$ are by right multiplications on $x$ and $y$, respectively. It is known that the subspace $\spann\{e_1,\dots,e_n,x\}$ forms a subalgebra of the algebra $R(g^{1}_{(n,1)},2)$. Therefore this subalgebra is isomorphic to the algebra $R_7(g^{1}_{(n,1)},1)(0)$ in the list of Theorem \ref{thmmu1}. Then the multiplication table of algebra $R(g^{1}_{(n,1)},2)$
has the following form:
$$\left\{\begin{array}{ll}
[e_1,x]=-[x,e_1]=e_1,\\[1mm]
[e_i,x]=-[x,e_i]=(i-2)e_i,&2\leq i\leq n-1,\\[1mm]
[e_n,x]=-[x,e_n]=(n-4)e_n,\\[1mm]
[e_1,y]=\sum\limits_{t=2}^{n}a_te_t,\\[1mm]
[e_2,y]=e_2+\sum\limits_{t=3}^{n}b_te_t,\\[1mm]
[e_i,y]=e_i+\sum\limits_{j=i+1}^{n-1}b_{j-i+2}e_j+(-1)^ia_{n-i+1}e_n,&3\leq i\leq n-1,\\[1mm]
[e_n,y]=2e_n,\\[1mm]
[y,e_i]=\sum\limits_{t=1}^{n}\delta_{2,i}^te_t,&1\leq i\leq n,\\[1mm]
[x,y]=\sum\limits_{t=1}^{n}t_{1,2}^te_t,\\[1mm]
[y,x]=\sum\limits_{t=1}^{n}t_{2,1}^te_t,\\[1mm]
[y,y]=\sum\limits_{t=1}^{n}t_{2,2}^te_t.\\[1mm]
\end{array}\right.$$

Taking into account that $e_{1},\dots, e_{n} \notin \Ann_r(R(g^1_{(n,2)}, 1))$, we derive the products $[y,e_i]=-[e_i,y], \ [x,y]=-[y,x], \ [y,y]=0$ for $1\leq i\leq n$.
Considering the following change of generator basis elements:
$$e_1^\prime=e_1+(a_{n-1}-b_{n})e_{n-1},\ \ e_2^\prime=e_2+(a_{n-1}-b_{n})e_{n},\ \ x^\prime=x-(n-4)(a_{n-1}-b_{n})e_{n-2},$$
$$y^\prime=y+b_{3}e_1-\sum\limits_{t=3}^{n-2}a_{t}e_{t-1}-(2a_{n-1}-b_{n})e_{n-2},$$
we obtain
$$b_{3}=b_n=a_{t}=0, \quad 3\leq t\leq n-1.$$

Applying the Leibniz identity for triples of elements we obtain
$$\left\{\begin{array}{lll}
{\mathcal L}(e_2,y,x)=0 & \Rightarrow & t_{2,1}^1=t_{2,1}^{n-2}=b_{t}=0,\ 4\leq t\leq n-1, \\[1mm]
{\mathcal L}(e_1,y,x)=0 & \Rightarrow & t_{2,1}^{t}=a_{2}=a_{n}=0,\ 2\leq t\leq n-3.  \\[1mm]
\end{array}\right.$$

Putting
$$y^\prime=y-\frac{t_{2,1}^{n-1}}{n-3}e_{n-1}-\frac{t_{2,1}^n}{n-4}e_n,$$
we get $t_{2,1}^{n-1}=t_{2,1}^{n}=0$.

Thus we obtain the following algebra.
$$R(g^{1}_{(n,1)}, 2):\left\{\begin{array}{ll}
[e_1,x]=-[x,e_1]=e_1,&\\[1mm]
[e_i,x]=-[x,e_i]=(i-2)e_i,&2\leq i\leq n-1,\\[1mm]
[e_n,x]=-[x,e_n]=(n-4)e_n,&\\[1mm]
[e_i,y]=-[y,e_i]=e_i,& 2\leq i\leq n-1,\\[1mm]
[e_n,y]=-[y,e_n]=2e_n,\\[1mm]
\end{array}\right.$$

The algebra $R(g^{2}_{(n,1)}, 2)$ is obtained by applying similar arguments used for the algebra $R(g^{1}_{(n,1)}, 2)$.
\end{proof}

\subsection{Rigidity of algebras $R(g^{i}_{(n,1)}, 2), \ i=1, 2$}

\

In this part we prove the rigidity of the algebras $R(g^{1}_{(n,1)}, 2)$ and $R(g^{1}_{(n,1)}, 2)$.
First we prove the triviality of the second cohomology group with coefficient in itself for these algebras.

\begin{thm}
$H^2(R(g^{1}_{(n,1)}, 2),R(g^{1}_{(n,1)}, 2))=H^2(R(g^{2}_{(n,1)}, 2),R(g^{2}_{(n,1)}, 2))=\{0\}$.
\end{thm}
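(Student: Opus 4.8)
The plan is to compute $H^2(R,R)$, for $R:=R(g^i_{(n,1)},2)$ with $i=1,2$, via the Hochschild--Serre type factorization recorded in \eqref{eq1}. Both algebras are, by Theorem~\ref{thmmu2}, solvable Lie algebras of the form $N\dot{+}Q$ with nilradical $N=g^i_{(n,1)}$ and complement $Q=\spann\{x,y\}$. Reading off the multiplication tables one checks at once that $[x,y]=[y,x]=[x,x]=[y,y]=0$, so $Q$ is abelian, and that the operators $\mathcal{R}_x,\mathcal{R}_y$ act diagonally on the basis $\{e_1,\dots,e_n\}$ of $N$. Hence the hypotheses of \eqref{eq1} are satisfied and, specializing to $n=2$,
$$H^2(R,R)\simeq \big(H^0(Q,\mathbb{C})\otimes H^2(N,R)^Q\big)\oplus\big(H^1(Q,\mathbb{C})\otimes H^1(N,R)^Q\big)\oplus\big(H^2(Q,\mathbb{C})\otimes H^0(N,R)^Q\big).$$

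First I would dispose of the factors $H^a(Q,\mathbb{C})$. Since $Q$ is the two-dimensional abelian Lie algebra acting trivially on $\mathbb{C}$, its cohomology is the exterior algebra on $Q^*$, so $H^0,H^1,H^2$ are all nonzero (of dimensions $1,2,1$). Consequently the whole right-hand side vanishes as soon as the three invariant spaces $H^0(N,R)^Q$, $H^1(N,R)^Q$ and $H^2(N,R)^Q$ are shown to be trivial, and this is what the remainder of the argument is devoted to.

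The key observation that makes these computations finite is that the diagonal action of $Q$ equips $N$, hence each cochain space $\Hom(\Lambda^b N,R)$, with a $\mathbb{Z}^2$-weight grading: from the tables each $e_j$ carries an explicit weight $w_j$ (for $g^1_{(n,1)}$ one gets $w_1=(1,0)$, $w_j=(j-2,1)$ for $2\le j\le n-1$, $w_n=(n-4,2)$, and analogously for $g^2_{(n,1)}$), while $x,y$ have weight $(0,0)$. As the differential preserves weight, $H^b(N,R)$ is weight-graded, and the defining condition \eqref{eq2} of $H^b(N,R)^Q$ says precisely that the cochain commutes with the $x$- and $y$-actions, i.e. it is the weight-zero component. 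Thus I need only show that the weight-zero parts of $H^0,H^1,H^2(N,R)$ vanish. The case $b=0$ is immediate: the only weight-zero vectors of $R$ lie in $\spann\{x,y\}$, and neither $x$ nor $y$ centralizes $N$ (for instance $[e_1,x]=e_1$ and $[e_2,y]=e_2$), so $H^0(N,R)^Q=0$.

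The main obstacle is the term $b=2$: I must prove that every weight-zero $2$-cocycle $\varphi\colon\Lambda^2 N\to R$ is a coboundary (the case $b=1$ is handled by the same scheme but is lighter, the only weight-zero coboundaries coming from $r\in\spann\{x,y\}$). Here I would exploit the grading systematically: being weight-zero forces $\varphi(e_i,e_j)$ to lie in the weight-$(w_i+w_j)$ subspace of $R$, spanned by the few basis vectors of that weight (or $\{0\}$ when none exists), which collapses the problem to a finite linear system indexed by the admissible weights. Imposing the cocycle identity $d^2\varphi=0$ on this system and then subtracting a coboundary $d\psi$ with $\psi$ of the same weight, I would eliminate the remaining free parameters and conclude $\varphi\in B^2(N,R)$. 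The genuine difficulty is the bookkeeping of which weights $w_i+w_j$ are actually realized and the solution of the resulting relations; the two nilradicals $g^1_{(n,1)}$ and $g^2_{(n,1)}$ are treated in parallel, substituting their respective weight data. Once all three invariant spaces are trivial, the factorization yields $H^2(R,R)=\{0\}$, which is the asserted vanishing and, through Theorem~\ref{Bal}, the rigidity announced in the abstract.
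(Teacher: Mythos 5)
Your proposal follows essentially the same route as the paper: both invoke the factorization \eqref{eq1} for the solvable Lie algebra $R=N\dot{+}Q$ with abelian $Q$ acting diagonally, reduce everything to the vanishing of $H^0(N,R)^Q$, $H^1(N,R)^Q$, $H^2(N,R)^Q$, and use the $Q$-weights of the basis $\{e_1,\dots,e_n\}$ (your $\mathbb{Z}^2$-grading is just a clean repackaging of condition \eqref{eq2}) to cut the cochain spaces down to a finite list of parameters. The one caveat is that you only sketch the decisive $b=2$ step, whereas the paper actually carries it out, finding that the invariant $2$-cocycles depend on the free parameters $a_{1,2},\dots,a_{1,n-2},a_{2,n-2}$ (resp.\ $a_{2,n}$ for $g^2_{(n,1)}$), so that $\dim Z^2(N,R)^Q=n-2=\dim B^2(N,R)^Q$ and hence $H^2(N,R)^Q=\{0\}$ --- a computation your weight bookkeeping would reproduce.
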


\begin{proof}
 Let $R$ be a solvable Lie algebra isomorphic to $R(g^{i}_{(n,1)}, 2), \ i=1, 2$. Then from the formula (\ref{eq1}), we have the following:
$$H^2(R,R)\simeq H^2(Q,\mathbb{C})\otimes H^0(N,R)^Q+H^1(Q,\mathbb{C})\otimes H^1(N,R)^Q+H^0(Q,\mathbb{C})\otimes H^2(N,R)^Q.$$

From this it is clear that if $H^i(N,R)^Q=\{0\}$ for all $0\leq i\leq 2$, then we get $H^2(R,R)=\{0\}$.
Since the center of $R$ is trivial, then $H^0(N,R)^Q=\{0\}$. Next, we   consider the cohomology groups $H^1(N,R)^Q$ and $H^2(NL,R)^Q$.

Let $f\in C^1(N,R)$, then $f$ has the from
$$f(e_i)=\sum\limits_{t=1}^{n}\alpha_{i,t}e_t+\beta_ix+\gamma_iy, \quad 1 \leq i \leq n, \quad e_i\in N.$$

Now we rewrite the formula (\ref{eq2}) for the elements $x\in Q$ follows

$$(d^1f)(x,e_i)=[x,f(e_i)]-f([x,e_i])=0.$$

From the equalities $(d^1f)(x,e_i)=(d^1f)(y,e_i)=0$ for $1\leq i\leq n$ we deduce
$$f(e_i)=\alpha_{i,i}e_i, \quad 1 \leq i \leq n, \quad e_i\in N.$$

this imply $\dim C^1(N,R)=n$. In order to compute the $Z^1(N,R)^Q$, we need the general form of a derivation.

From the equalities $(d^1f)(e_i,e_j)=0$ for $1\leq i,j\leq n$ we obtain
\begin{equation}\label{eq3}\left\{\begin{array}{ll}
f(e_1)=\alpha_{1,1}e_1,\\[1mm]
f(e_2)=\alpha_{2,2}e_2,\\[1mm]
f(e_i)=((i-2)\alpha_{1,1}+\alpha_{2,2})e_i,& 3\leq i \leq n-1,\\[1mm]
f(e_n)=(\varepsilon\alpha_{1,1}+2\alpha_{2,2})e_n,\\[1mm]
\end{array}\right.\end{equation}
where $\varepsilon=n-4$, if $R=R(g^{1}_{(n,1)}, 2)$ and $\varepsilon=0$, if $R=R(g^{2}_{(n,1)}, 2)$.

Further, from (\ref{eq3}) we deduce $\dim Z^1(N,R)^Q=2$.
It is easily seen that $\mathcal{R}_x$ and $\mathcal{R}_y$ are 1-coboundaries. As a consequence,
$$\dim H^1(N,R)^Q=0 \quad \text{and} \quad \dim B^2(N,R)^Q=\dim C^1(N,R)-\dim Z^1(N,R)^Q=n-2.$$

We now proceed to compute the 2-cocycles. For an arbitrary fixed cochain $\varphi\in C^2(N,R)$ we set
$$\varphi(e_i,e_j)=\sum^n_{k=1} a^k_{i,j}e_k+b_{i,j}x+c_{i,j}y, \quad 1\leq i,j\leq n.$$

Using formula (\ref{eq2}) for the elements $x\in Q$ we have
$$(d^2\varphi)(x,e_i,e_j)=[x,\varphi(e_i,e_j)]-\varphi([x,e_i],e_j)-\varphi(e_i,[x,e_j])=0.$$

in the case when the solvable algebra $R$ is isomorphic to the algebra $R(g^1_{(n,1)},2)$ by considering the equalities $(d^2\varphi)(x,e_i,e_j)=(d^2\varphi)(y,e_i,e_j)=0$,
 we obtain the following
$$\left\{\begin{array}{ll}
\varphi(e_1,e_i)=-\varphi(e_i,e_1)=a_{1,i}e_{i+1},&2\leq i\leq n-2,\\[1mm]
\varphi(e_i,e_{n-i})=-\varphi(e_{n-i},e_{i})=a_{i,n-i}e_{n},&2\leq i\leq n-2.\\[1mm]
\end{array}\right.$$

Further, the equalities $(d^2\varphi)(e_1,e_i,e_{n-i-1})=0$ imply the following restrictions:
$$a_{i+1,n-i-1}=(-1)^{i+1}(a_{1,n-i-1}-a_{1,i})-a_{i,n-i}, \quad  2\leq i \leq\frac{n-1}{2}.$$
In this case, the free parameters are $a_{1,2},a_{1,3},\dots,a_{1,n-2},a_{2,n-2}$.

For the case when the solvable algebras $R$ is isomorphic to the algebra $R(g^2_{(n,1)},2)$. Then, the considering the equalities
$(d^2\varphi)(x,e_i,e_j)=(d^2\varphi)(y,e_i,e_j)=(d^2\varphi)(e_1,e_i,e_{n})=0$,
 we obtain the following

$$\left\{\begin{array}{ll}
\varphi(e_1,e_i)=-\varphi(e_i,e_1)=a_{1,i}e_{i+1},&2\leq i\leq n-2,\\[1mm]
\varphi(e_i,e_{n})=-\varphi(e_{n},e_{i})=a_{i,n}e_{i+2},&2\leq i\leq n-3,\\[1mm]
\end{array}\right.$$
where,
$$a_{i+1,n}=a_{i,n}-a_{1,i}+a_{1,i+2}, \quad  2\leq i \leq n-4.$$
In this case, the free parameters are $a_{1,2},a_{1,3},\dots,a_{1,n-2},a_{2,n}$.

Therefore,
$$\dim H^2(N,R)^Q=\dim Z^2(N,R)^Q-\dim B^2(N,R)^Q=n-2-(n-2)=0.$$
\end{proof}

\begin{cor}
The algebras of Theorem \ref{thmmu2} are rigid.
\end{cor}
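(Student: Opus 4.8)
The plan is to obtain the corollary as an immediate consequence of the preceding theorem together with the rigidity criterion of Theorem \ref{Bal}. The two algebras $R(g^1_{(n,1)},2)$ and $R(g^2_{(n,1)},2)$ are precisely the algebras listed in Theorem \ref{thmmu2}, and the preceding theorem establishes that their second cohomology groups with coefficients in themselves are trivial. Theorem \ref{Bal} asserts that vanishing of the second cohomology forces an algebra to have an open orbit under the $\GL_{n+2}(\mathbb{C})$-action on the variety $\Leib_{n+2}$, that is, to be rigid. So I would simply apply Theorem \ref{Bal} to each of the two algebras and conclude.

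The one point deserving care is the passage from the cohomology computed above, obtained through the Hochschild--Serre factorization \eqref{eq1} which is formulated for the adjoint Lie cohomology $H^2(R,R)$, to the Leibniz cohomology $HL^2(R,R)$ entering Theorem \ref{Bal}. Since each $R(g^i_{(n,1)},2)$ is in fact a Lie algebra, its adjoint is a \emph{symmetric} module, so the Chevalley--Eilenberg complex embeds in the Leibniz complex as the subcomplex of antisymmetric cochains. To close the gap I would show that every Leibniz $2$-cocycle on $R$ is cohomologous to an antisymmetric one: the symmetric part of a cocycle takes values in $\Ann_r(R)$ and can be absorbed by a suitable $1$-cochain, using the explicit products of $R(g^i_{(n,1)},2)$ recorded in Theorem \ref{thmmu2} and the action of the two outer derivations $\mathcal{R}_x$ and $\mathcal{R}_y$. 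This would give $HL^2(R,R)=H^2(R,R)=\{0\}$.

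The main (and essentially only) obstacle is precisely this identification of the two second-cohomology groups; once it is in place, rigidity follows formally from Theorem \ref{Bal}. I expect the verification that the symmetric component contributes nothing to reduce, after \eqref{eq1} has already trivialised the antisymmetric part, to a short bookkeeping check against the nilradical relations of $g^i_{(n,1)}$, so that the corollary is genuinely a one-line deduction from the preceding theorem once the Lie-versus-Leibniz cohomology comparison is recorded.
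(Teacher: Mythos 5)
Your argument reaches the right conclusion and follows the same architecture as the paper's proof: vanishing of the Lie cohomology $H^2(R,R)$ from the preceding theorem, a bridge from Lie to Leibniz cohomology, and then Theorem \ref{Bal}. The difference lies in the bridge. The paper does not prove the comparison by hand: it observes that $\cent(R)=\{0\}$ and cites Theorem 2 of \cite{Fialowski}, which states precisely that a Lie algebra with trivial center and $H^2(R,R)=\{0\}$ has $HL^2(R,R)=\{0\}$. You propose to re-derive this comparison, which is legitimate and makes the corollary self-contained, but one step of your sketch is misstated: the symmetric part of a Leibniz $2$-cocycle can never be ``absorbed by a suitable $1$-cochain,'' because on a Lie algebra every Leibniz $1$-coboundary $d^1f(x,y)=[x,f(y)]+[f(x),y]-f([x,y])$ is antisymmetric, so cohomologous cocycles have identical symmetric parts. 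What saves your plan is that no absorption is needed: evaluating the cocycle identity on $(x,y,y)$ gives $(d^2\varphi)(x,y,y)=[x,\varphi(y,y)]$ since $[y,y]=0$, so $\varphi(y,y)\in\Ann_r(R)$ and, by polarization, the whole symmetric part takes values in $\Ann_r(R)$, exactly as you claim; since $R$ is a Lie algebra, $\Ann_r(R)=\cent(R)$, and for both algebras of Theorem \ref{thmmu2} the center is trivial (this hypothesis is left implicit in your sketch, and it is exactly why the center condition appears both in the paper's proof and in \cite{Fialowski}). Hence every Leibniz $2$-cocycle is already antisymmetric, the antisymmetric subcomplex computes Chevalley--Eilenberg cohomology as you note, and $HL^2(R,R)=H^2(R,R)=\{0\}$, so Theorem \ref{Bal} applies. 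In short: replace ``can be absorbed by a $1$-cochain'' by ``vanishes identically because $\Ann_r(R)=\cent(R)=\{0\}$,'' and your proof is correct; what it buys over the paper's version is a self-contained justification of the Lie-versus-Leibniz identification, at the cost of re-proving a cited result.
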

\begin{proof}
Let $R$ be a solvable Leibniz algebra of the list of Theorem \ref{thmmu2}. Since $\cent(R)=\{0\}$ and $H^2(R,R)=\{0\}$. From Theorem 2 in \cite{Fialowski} we conclude that $HL^2(R,R)=\{0\}$.  Due to Theorem \ref{Bal} this imply that $R$ is a rigid algebra.
\end{proof}

\begin{thm}
Let $R$ be a solvable Lie algebra isomorphic to one of the algebra from the list of Theorem \ref{thmmu3}. Then $\dim H^2(R,R)=1$.
\end{thm}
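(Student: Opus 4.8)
The plan is to run exactly the same machinery as in the preceding rigidity theorem, the only structural change being that now the complemented space $Q=\langle x\rangle$ is one-dimensional. Since a one-dimensional $Q$ is automatically abelian and $\mathcal{R}_x$ is diagonal (the weights $w_i$ defined by $[e_i,x]=w_ie_i$ are read off directly from Theorem \ref{thmmu3}), the hypotheses of the factorization formula (\ref{eq1}) hold. For a one-dimensional abelian Lie algebra one has $H^0(Q,\mathbb{C})=H^1(Q,\mathbb{C})=\mathbb{C}$ and $H^a(Q,\mathbb{C})=0$ for $a\geq 2$, so the $a=2$ summand in (\ref{eq1}) drops out and
\[H^2(R,R)\simeq \big(H^1(Q,\mathbb{C})\otimes H^1(N,R)^Q\big)\oplus\big(H^0(Q,\mathbb{C})\otimes H^2(N,R)^Q\big),\]
whence $\dim H^2(R,R)=\dim H^1(N,R)^Q+\dim H^2(N,R)^Q$. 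The whole problem is thus reduced to the two $Q$-invariant cohomology groups of the nilradical $N\in\{g^3_{(n,1)},g^1_7,g^2_9,g^3_{11}\}$ with coefficients in $R$.

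Next I would establish $\dim H^1(N,R)^Q=0$. Writing a $1$-cochain as $f(e_i)=\sum_t\alpha_{i,t}e_t+\beta_i x$ and imposing the invariance condition $(d^1f)(x,e_i)=0$ from (\ref{eq2}) yields $\alpha_{i,t}(w_i-w_t)=0$ together with $w_i\beta_i=0$. For every algebra of Theorem \ref{thmmu3} the weights $w_i$ are nonzero and pairwise distinct (this is the one place where the explicit diagonal action is used), so the invariant $1$-cochains are exactly the diagonal ones $f(e_i)=\alpha_{i,i}e_i$, giving an invariant cochain space of dimension $n$. Adding the cocycle (derivation) identities on the generating products $[e_1,e_i]=e_{i+1}$ and on the auxiliary brackets of $N$ forces each $\alpha_{i,i}$ to be a fixed multiple of $\alpha_{1,1}$, so $\dim Z^1(N,R)^Q=1$, spanned by $\mathcal{R}_x$. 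Since $\mathcal{R}_x$ is the coboundary of $x\in R$, it follows that $H^1(N,R)^Q=0$ and therefore $\dim B^2(N,R)^Q=n-1$.

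It then remains to compute $\dim Z^2(N,R)^Q$, and I expect to obtain $\dim Z^2(N,R)^Q=n$, which gives $\dim H^2(N,R)^Q=n-(n-1)=1$ and hence $\dim H^2(R,R)=1$. To carry this out I would parametrize an invariant $2$-cochain $\varphi(e_i,e_j)=\sum_k a^k_{i,j}e_k+b_{i,j}x$, impose the invariance equations $(d^2\varphi)(x,e_i,e_j)=[x,\varphi(e_i,e_j)]-\varphi([x,e_i],e_j)-\varphi(e_i,[x,e_j])=0$ (a weight condition forcing $\varphi(e_i,e_j)$ into the weight-$(w_i+w_j)$ subspace, which in particular kills the $x$-components since the $w_i$ are positive), and then the cocycle equations $(d^2\varphi)(e_i,e_j,e_k)=0$, finally counting the surviving free parameters among the $a^k_{i,j}$.

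This last count is the main obstacle, and it must be performed separately for each of the four nilradicals $g^3_{(n,1)},g^1_7,g^2_9,g^3_{11}$, tracking how the structure relations of Theorem \ref{thm26} couple the coefficients $a^k_{i,j}$. The outcome differs from Theorem \ref{thmmu2} precisely because the single element $x$ now imposes only one invariance constraint rather than two, so one extra free parameter survives in the cocycle space. The resulting one-dimensional group $H^2(N,R)^Q$ is the genuine obstruction class that keeps these algebras from being rigid, in contrast with the algebras of Theorem \ref{thmmu2}.
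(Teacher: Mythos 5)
Your reduction is correct and follows exactly the machinery the paper uses for the preceding rigidity theorem: with $\dim Q=1$ the factorization \eqref{eq1} gives $\dim H^2(R,R)=\dim H^1(N,R)^Q+\dim H^2(N,R)^Q$; the weights in Theorem \ref{thmmu3} are indeed nonzero and pairwise distinct (they form the set $\{1,2,\dots,n\}$ for each of the four algebras, e.g.\ $w_1=1$, $w_n=2$, $w_i=i+1$ for $R(g^3_{(n,1)},1)$), so the invariance condition \eqref{eq2} correctly collapses $1$-cochains to diagonal maps and kills the $x$-components; and comparing with Proposition \ref{prop1} one does get $Z^1(N,R)^Q=\langle \mathcal{R}_x\rangle$, hence $H^1(N,R)^Q=0$ and $\dim B^2(N,R)^Q=n-1$. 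Up to this point everything you write checks out.

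The genuine gap is that the heart of the theorem --- the equality $\dim Z^2(N,R)^Q=n$, equivalently $\dim H^2(N,R)^Q=1$ --- is only announced (``I expect to obtain''), never proved. This is not a formal consequence of your setup: after the weight condition $w_k=w_i+w_j$ restricts the invariant cochain parameters $a^k_{i,j}$, the cocycle equations $(d^2\varphi)(e_i,e_j,e_k)=0$ must be solved case by case, and the count of surviving parameters depends delicately on the particular brackets of Theorem \ref{thm26} (e.g.\ the coefficients $3e_7$, $5e_8$, $-2e_{i+3}$ in $g^2_{9}$, or the sign pattern in $g^3_{11}$); nothing in your argument rules out, say, $\dim Z^2(N,R)^Q=n-1$ (which would give rigidity) or $n+1$. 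The paper's proof consists precisely of this missing content: it exhibits an explicit representative cocycle for each algebra (for instance $\varphi(e_1,e_6)=e_7$, $\varphi(e_3,e_4)=e_7$ for $R(g^1_{7},1)$, and a thirteen-component cocycle for $R(g^3_{11},1)$), thereby certifying a nontrivial class, with the parameter count supplying uniqueness up to scalar. As written, your argument establishes only $\dim H^2(R,R)=\dim H^2(N,R)^Q$; to complete it you must either carry out the four parameter counts or verify that a concrete invariant cocycle such as the ones above is not a coboundary and spans $Z^2(N,R)^Q$ modulo $B^2(N,R)^Q$.
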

\begin{proof}
The following cocycles $\varphi$ are representatives of $H^2(R,R)$:

\begin{itemize}
\item for the solvable Lie algebra $R(g^3_{(n,1)},1)$:
$$\left\{\begin{array}{ll}
\varphi(e_1,e_n)=e_2,\\[1mm]
\varphi(e_i,e_n)=(i-3)e_{i+2},& 4 \leq i \leq n-3,\\[1mm]
\varphi(e_2,e_i)=1,5(i-4)e_{i+3},& 5 \leq i \leq n-4,\\[1mm]
\varphi(e_3,e_i)=-1,5e_{i+4},& 4 \leq i \leq n-5.\\[1mm]
\end{array}\right.$$

\item for the solvable Lie algebra $R(g^1_{7},1)$:
$$\varphi(e_1,e_6)=e_7,\ \varphi(e_3,e_4)=e_{7},$$

\item for the solvable Lie algebra $R(g^2_{9},1)$:
$$\varphi(e_1,e_8)=e_9,\ \varphi(e_2,e_6)=-24e_{8}, \ \varphi(e_3,e_4)=-6e_{7}, \ \varphi(e_3,e_5)=-6e_{8},$$
$$\varphi(e_3,e_6)=5e_9,\ \varphi(e_4,e_5)=-7e_{9},$$

\item for the solvable Lie algebra $R(g^3_{11},1)$:
$$\varphi(e_1,e_{10})=e_{11},\ \varphi(e_2,e_5)=-\frac{3}{4}e_{7}, \ \varphi(e_2,e_6)=-\frac{3}{2}e_{8}, \ \varphi(e_2,e_7)=-e_{9},$$
$$\varphi(e_2,e_8)=\frac{3}{4}e_{10},\ \varphi(e_3,e_4)=-\frac{3}{4}e_{7},\ \varphi(e_3,e_5)=-\frac{3}{4}e_{8},\ \varphi(e_3,e_6)=\frac{1}{2}e_{9},$$
$$\varphi(e_3,e_7)=\frac{7}{4}e_{10},\ \varphi(e_4,e_5)=-\frac{5}{4}e_{9},\ \varphi(e_4,e_6)=-\frac{5}{4}e_{10},\ \varphi(e_4,e_7)=-e_{11},$$
$$\varphi(e_5,e_6)=2e_{11}.$$
\end{itemize}
\end{proof}

Since $\cent(R)=\{0\}$ and $\dim H^2(R,R)=1$ for the algebras of the list of Theorem \ref{thmmu2}, then due to  \cite[Theorem 2]{Fialowski} we conclude $\dim HL^2(R,R)=1$.

\section*{Acknowledgements}
This work was supported by Agencia Estatal de Investigación (Spain), grant
MTM2016-79661-P (European FEDER support included, UE).

\end{document}